\DeclareMathOperator*{\Res}{Res}
\newcommand{\BZ}{{\mathbb Z}}
\newcommand{\nc}{\newcommand}
\newcommand{\Z}{\mathbb Z}
\nc{\rnc}{\renewcommand} \nc{\beq}{\begin{equation}}
\nc{\eeq}{\end{equation}} \nc{\beqa}{\begin{eqnarray}}
\nc{\eeqa}{\end{eqnarray}}
\def \s{\underline{s}}
\def\stackreb#1#2{\ \mathrel{\mathop{#1}\limits_{#2}}}
\newcommand{\R}{\mathbb{R}}
\newcommand{\C}{\mathbb{C}}
\newtheorem{theorem}{Theorem}
\newtheorem{lemma}{Lemma}
\newtheorem{conjecture}{Conjecture}
\newtheorem{definition}{Definition}
\begin{document}

\begin{flushright} DESY 12-195 \end{flushright}

\title{A TQFT of Turaev--Viro type on shaped triangulations}

\author{Rinat Kashaev}
\address{University of Geneva\\
2-4 rue du Li\`evre, Case postale 64\\
 1211 Gen\`eve 4, Switzerland}
\email{rinat.kashaev@unige.ch}

\author{Feng Luo}
\address{Department of Mathematics\\
 Rutgers University\\
 Piscataway, NJ 08854, USA}
\email{fluo@math.rutgers.edu}

\author{Grigory Vartanov}
\address{DESY Theory\\
 Notkestrasse 85, 22603 Hamburg, Germany}
\email{grigory.vartanov@desy.de}

\thanks{Supported in part by Swiss National Science Foundation and United States National Science Foundation}

\begin{abstract}
A shaped triangulation is a finite triangulation of an oriented  pseudo three manifold where each tetrahedron carries dihedral angles of an ideal hyberbolic tetrahedron. To each shaped triangulation, we associate a quantum partition function in the form of an absolutely convergent state integral which is  invariant under shaped $3-2$ Pachner moves and  invariant with respect to  shape gauge transformations generated by total dihedral angles around internal edges through the Neumann--Zagier Poisson bracket. Similarly to Turaev--Viro theory, the state variables live on edges of the triangulation but take their values on the whole real axis. The tetrahedral weight functions  are composed of three  hyperbolic gamma functions in a way that they enjoy a manifest tetrahedral symmetry. We conjecture that for shaped triangulations of closed 3-manifolds, our partition function is twice the absolute value squared of the partition function of Techm\"uller TQFT defined by Andersen and Kashaev. This is similar to the known relationship between the Turaev--Viro and  the Witten--Reshetikhin--Turaev invarints of three manifolds. We also discuss interpretations of our construction in terms of three-dimensional supersymmetric field theories related to triangulated three-dimensional manifolds.
\end{abstract}

\maketitle
\section{Introduction}

Topological Quantum Field Theories were discovered and axiomatized by Atiyah~\cite{At}, Segal~\cite{S} and Witten~\cite{W}. First examples in $2+1$ dimensions were constructed by Reshetikhin and Turaev \cite{RT1,RT2,T} by using  the combinatorial framework of Kirby calculus, and by Turaev and Viro \cite{TV} by using the framework of triangulations and Pachner moves. The algebraic ingredients of both constructions come from the finite dimensional representation category of the quantum group $U_q(sl(2))$ at roots of unity. For example, the basic building elements in Turaev--Viro construction  are tetrahedral weight functions given by $6j$-symbols. These theories have been the subject of much subsequent investigation in the works of Blanchet, Habegger, Masbaum, Vogel, Barrett, Westbury, Turaev, Virelizier, Balsam, Kirillov and others \cite{BHMV1,BHMV2, BW, TVi, BK}. A related but somewhat different line of
development was initiated by Kashaev in \cite{K4} where a state sum invariant of
links in three manifolds was defined by using the combinatorics of charged triangulations  where the  charges are algebraic versions of dihedral angles of ideal hyperbolic tetrahedra in finite cyclic groups. This approach has been subsequently developed by Baseilhac, Benedetti, Geer, Kashaev, Turaev \cite{BB,GKT}. The common feature of all these theories is that the partition functions are always given by finite state sums.

On the other hand, the idea of partition functions of Turaev--Viro type originates from the work of  Ponzano and Regge \cite{PR} where, based on $SU(2)$ $6j$-symbols, a lattice version of quantum $2+1$ gravity was suggested, but  this theory was not complete and remained of restricted use  because of problems of convergence of infinite sums. Similar problems of convergence appear when one tries to construct combinatorial versions of quantum Chern--Simons theories with non-compact gauge groups. For example, a connected component of $PSL(2,\R)$ Chern--Simons theory is identified with Teichm\"uller space, and its  quantum theory corresponds to specific class of unitary  mapping class group representations in infinite dimensional Hilbert spaces  \cite{K1,CF}. Based on quantum Teichm\"uller theory, formal state-integral partition functions of triangulated three manifolds were defined by Hikami, Dimofte, Gukov, Lenells, Zagier, Dijkgraaf, Fuji, Manabe
\cite{H1,H2,DGLZ,DFM,D}, mostly for the purposes of quasi classical expansions, but the question of convergence remained largely open until a mathematically rigorous version of Teichm\"uller TQFT was suggested  in \cite{Andersen:2011bt}. The  convergence property of Teichm\"uller TQFT is due its specific underlying combinatorial setting: it is not just triangulations but shaped triangulations where  each tetrahedron carries dihedral angles of an ideal hyberbolic tetrahedron. Moreover,  the role of dihedral angles is two-fold: they not only provide absolute convergence of state integrals but they also implement the complete symmetry with respect to change of edge orientations. Although, shaped triangulations are similar to charged triangulations of \cite{K4}, the positivity condition of dihedral angles imposes important restrictions on construction of  topologically invariant partition functions.

The purpose of this paper is to suggest yet another TQFT based on combinatorics of shaped triangulations. As its basic building block is defined in terms of Faddeev's quantum dilogarithm \cite{F} and the absolute convergence of partition functions relies on the positivity of dihedral angles,  it is similar to the Teichm\"uller TQFT. As a consequence, we are still restricted in our abilities of constructing topologically invariant partition functions in the sense that the $2-3$ shaped Pachner move is not always applicable. On the other hand,  unlike the Teichm\"uller TQFT, our tetrahedral weight functions enjoy manifest tetrahedral symmetry and the partition function is well defined on any shaped triangulation without any extra topological restrictions.

Let us now describe  our construction in precise terms.

\subsection{States, state potentials, and state gauge invariance}
Let $Y$ be a CW-complex. Denote by $\Delta_i(Y)$ the set of $i$-dimensional cells of $Y$. A \emph{state} of $Y$ is a map
$s\colon \Delta_1(Y)\to \R$. A \emph{state potential} is a map $g\colon  \Delta_0(Y)\to \R$. Define a linear \emph{state gauge} map
\beq
b\colon \R^{\Delta_0(Y)}\to \R^{\Delta_1(Y)},\quad bg(e)=g(\partial_0e)+g(\partial_1e),
\eeq
where $\partial_ie$, $i\in\{0,1\}$, are the two end points of $e$ (they coincide if the edge is a loop).  A state is called \emph{pure gauge} if it finds itself in the image of the state gauge map.
 The pure gauge states constitute a vector subspace of the state space.

Let $S$ be a set. A function $f\colon \R^{\Delta_1(Y)}\to S$ is called \emph{state gauge invariant} at state $s$ if $f(s+bg)=f(s)$ for any state potential $g$.

A \emph{(state) gauge fixing} at vertex $v\in\Delta_0(Y)$ is a linear form  $\lambda$ on the vector space of states $\R^{\Delta_1(Y)}$ such that
\beq
\langle\lambda, bg\rangle=g(v),\quad \forall g\in\R^{\Delta_0(Y)}.
\eeq
Note that a gauge fixing at a vertex may not exist if the state gauge map is not injective.

In what follows, a real valued function defined on only a subset of vertices  will always be thought of as a state potential having zero values on the vertices where initially it was not defined.
\subsection{Shaped tetrahedra and their Boltzmann weights}
Let $T$ be an oriented tetrahedron embedded into $\R^3$ together with its
standard $CW$-complex structure. Let $\square(T)$ be
 the set of normal quadrilateral types (to be called \emph{quads})
  in $T$ which is in bijection with the set of pairs of opposite edges
  of $T$. We fix the action of $\Z/3\Z =\{1, \tau, \tau^2\}$ on
  $\square(T)$ so that the images of a quad $q$ under the action
  are $q, q'=\tau(q)$ and $q''=\tau^2(q)$
   corresponding to the clockwise cyclic order of three edges around a vertex (as seen from the outside of the tetrahedron).
    We say $T$ is \emph{shaped tetrahedron} if it is provided with  a \emph{dihedral angle map} $\alpha\colon \square(T)\to ]0,\pi[$, such that
    $\alpha(q)+\alpha(q')+\alpha(q'')=\pi$. Associated to $\alpha$, the complex shape variables entering Thurston's hyperbolicity equations are given  by a map $z_\alpha\colon \square(T)\to \C\setminus\{0,1\}$  defined by the formula
\beq \label{shape-p}
z_\alpha(q)=e^{i\alpha(q)}\sin\alpha(q'')/\sin\alpha(q'). \eeq
 Any state $s\colon \Delta_1(T)\to \R$ induces a map $\tilde s\colon\square(T)\to\R$  defined by the formula $\tilde s(q)=s(e)+s(e')$, where the $e$ and $e'$ are the opposite edges separated by $q$.
To each pair $(T,s)$ consisting of a shaped tetrahedron $T$ and a
state $s$ of $T$, we associate the following \emph{Boltzmann
weight} \beq
B(T,s):=\prod_{q\in\square(T)}\gamma^{(2)}\left(\frac{\omega_1+\omega_2}\pi\alpha(q)+
\sqrt{-\omega_1\omega_2}(\tilde s(q')-\tilde
s(q''));\omega_1,\omega_2\right) \eeq where function
$\gamma^{(2)}(z;\omega_1,\omega_2)$ is defined below
in~\eqref{HGF} with $\omega_1, \omega_2 \in \mathbf{C}$ and
$\omega_1/\omega_2 \notin (-\infty, 0]$.   It is easily verified
that this Boltzmann weight is state gauge invariant at any state.

\subsection{Shaped triangulations and their Boltzmann weights}
A \emph{triangulation} is an oriented pseudo 3-manifold obtained from finitely many tetrahedra in $\R^3$ by gluing them along triangular faces through orientation reversing affine $CW$-homeomorphisms. Any triangulation $X$ is naturally a $CW$-complex and its boundary $\partial X$ is the $CW$-subcomplex composed of unglued triangular faces.  We will use the following notation:
\beq
\Delta_i(\mathring{X}):=\Delta_i(X)\setminus\Delta_i(\partial X).
\eeq

A \emph{shaped triangulation} is a triangulation where all tetrahedra are shaped.
Similarly to the case of one shaped tetrahedron, to each pair $(X,s)$ consisting of a shaped triangulation $X$ and a state $s$ of $X$, we associate a Boltzmann weight
\beq\label{eq:bw}
B(X,s):=\prod_{T\in \Delta_3(X)} B\left(T,s\vert_{\Delta_1(T)}\right).
\eeq
Again, this Boltzmann weight is state gauge invariant at any state.

\subsection{The partition function of shaped triangulations}
A \emph{boundary state} of a trianguation $X$ is a state of its boundary. We have the natural linear restriction  map from the vector space of states of $X$ to the vector space of its  boundary states
\[
\partial\colon \R^{\Delta_1(X)}\to\R^{\Delta_1(\partial X)},
\]
and for any boundary state $s$,  we have a canonical identification of the preimage $\partial^{-1}(s)$ with the linear space $\R^{\Delta_1(\mathring{X})}$  of real valued functions on the interior edges of $X$.

A  \emph{state gauge fixing} in the  interior of a triangulation $X$ is a collection
\beq
\lambda=\{\lambda_v\}_{v\in\Delta_0(\mathring{X})}
\eeq of gauge fixings at all interior vertices. Notice that for any triangulation the state gauge map is injective and state gauge fixings exist at any vertex.

To any triple $(X,s,\lambda)$, where $X$ is a  shaped triangulation, $s$ is a boundary state  of $X$, and $\lambda$ is a state gauge fixing in the interior of $X$, we associate a \emph{partition  function}
\begin{equation}\label{eq:pf}
W_b(X,s,\lambda):=\int_{\partial^{-1}(s)} B(X,t)\delta\left(\langle\lambda,t\rangle\right) dt,
\end{equation}
where
\beq
\delta(\langle\lambda,t\rangle):=\prod_{v\in \Delta_0(\mathring{X})}
\delta\left(\langle\lambda_v,t\rangle\right),\quad dt:=\prod_{v\in \Delta_1(\mathring{X})}dt(e)
\eeq
and  $b =\sqrt{\frac{\omega_1}{\omega_2}}$.
The main result of this paper is the following theorem where we use the notions of \emph{shaped $3-2$  Pachner moves} and  \emph{shape gauge transformations} considered in \cite{Andersen:2011bt}.
\begin{theorem}\label{main}
The partition function $W_b(X,s,\lambda)$ is an absolutely convergent integral independent of the choice of the state  gauge fixing $\lambda$, invariant under shaped  $3-2$ Pachner moves, and invariant under  the shape gauge transformations induced by interior edges.
\end{theorem}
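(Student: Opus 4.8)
The plan is to establish the four assertions in sequence, treating convergence, gauge-fixing independence, the Pachner move, and shape gauge invariance as separate steps, since each relies on a different analytic or combinatorial property of the weight $\gamma^{(2)}$.

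First, for \emph{absolute convergence}, I would invoke the asymptotic behavior of the hyperbolic gamma function: along the real axis $|\gamma^{(2)}(x+iy;\omega_1,\omega_2)|$ decays like a Gaussian whose rate is governed by the ``imaginary'' (angle) part of its argument, and this rate is strictly positive precisely because every dihedral angle lies in $]0,\pi[$. The integrand $B(X,t)$ is a product over tetrahedra of three such factors, with arguments $\frac{\omega_1+\omega_2}{\pi}\alpha(q)+\sqrt{-\omega_1\omega_2}(\tilde t(q')-\tilde t(q''))$. I would show that along any unbounded ray in the integration space $\partial^{-1}(s)\cong\R^{\Delta_1(\mathring X)}$ at least one factor drives the integrand to zero fast enough. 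The bookkeeping here is a balancing argument: the angle contributions accumulated around edges guarantee decay in every coordinate direction, so the product is dominated by an integrable Gaussian-type envelope; the delta functions only cut the domain down to a linear subspace, which can only help.

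Second, for \emph{independence of the gauge fixing}, I would run a Faddeev--Popov argument. We already know $B(X,t)$ is invariant under $t\mapsto t+bg$ for any interior potential $g\in\R^{\Delta_0(\mathring X)}$, and for a triangulation the gauge map $b$ is injective, so the pure-gauge directions form a genuine $|\Delta_0(\mathring X)|$-dimensional subspace of the integration space. Splitting $t=t_0+bg$ into a slice coordinate and a gauge coordinate, the integrand depends only on $t_0$ while the constraints $\delta(\langle\lambda_v,t\rangle)$ localize $g$ through $\langle\lambda_v,bg\rangle=g(v)$. The defining normalization of a gauge fixing makes this localization produce a Jacobian equal to $1$ regardless of $\lambda$, so two gauge fixings yield the same integral over the slice.

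Third --- and this is where I expect the real work to be --- for invariance under the \emph{shaped $3-2$ Pachner move}, I would localize the computation to the star of the edge being removed. A $3-2$ move trades three tetrahedra around a common interior edge for two tetrahedra, and the state variable on that edge becomes an interior variable to be integrated out. The whole statement then reduces to a single local identity asserting that the integral over that one variable of the product of three tetrahedral weights equals the product of two tetrahedral weights. This identity is the integral (Fourier-transformed) form of the pentagon relation for Faddeev's quantum dilogarithm; the shaped hypothesis is exactly what guarantees that the three input angle conditions translate into admissible angles in $]0,\pi[$ on the two output tetrahedra, so that the identity applies and both sides converge. Verifying this pentagon/hyperbolic-beta identity with the correct matching of angle and state arguments, and checking that the gauge-fixing deltas transform correctly under the move, is the main obstacle. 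Finally, for \emph{shape gauge invariance}, I would show that an infinitesimal shape gauge transformation generated by an interior edge $e$ --- which redistributes the dihedral angles of the tetrahedra around $e$ while keeping their total fixed, via the Neumann--Zagier bracket --- acts on the arguments of the $\gamma^{(2)}$ factors exactly like a pure imaginary translation of the integration variable $t(e)$. Because $\gamma^{(2)}$ is holomorphic and the integrand decays by Step~1, this translation can be absorbed by a contour shift that returns the original integral, so $W_b$ is constant along the shape gauge flow; here the precise coefficients $\frac{\omega_1+\omega_2}{\pi}$ and $\sqrt{-\omega_1\omega_2}$, together with $b^2=\omega_1/\omega_2$, are what make the angle variation and the variable shift cancel exactly.
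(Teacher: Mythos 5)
Your steps 2--4 track the paper's proof closely: the Faddeev--Popov insertion for independence of $\lambda$ (the paper inserts $1=\int\delta(\langle\lambda',t+bg\rangle)dg$ and shuffles integrals, which is your slice-plus-Jacobian argument in different clothing), the reduction of the $3-2$ move to the local pentagon identity~\eqref{Pent1} for the weights $\mathcal{B}$ (which the paper derives from Spiridonov's elliptic beta integral), together with the proviso that the disappearing edge variable stay out of the gauge-fixing deltas, and the contour-shift/holomorphicity argument for shape gauge invariance. The genuine gap is in your first step, absolute convergence. You claim the angle contributions ``guarantee decay in every coordinate direction'' and that the delta functions ``only cut the domain down to a linear subspace, which can only help.'' This is false, and it contradicts the gauge invariance you yourself invoke in step 2: under $t\mapsto t+bg$ every quad combination $\tilde t(q')-\tilde t(q'')$ is unchanged, because a gauge transformation shifts all three values $\tilde t(q)$ of a given tetrahedron by the same amount $\sum_{i}g(v_i)$. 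Hence $B(X,t)$ is exactly constant along the pure-gauge subspace, which by injectivity of $b$ has dimension $|\Delta_0(\mathring{X})|$ inside the fiber $\partial^{-1}(s)$. Along those rays no factor decays at all, and the unconstrained integral diverges whenever there is at least one interior vertex --- which is the case in every closed example the paper computes. The deltas are not a harmless restriction; they are the entire mechanism of convergence.

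What your sketch is missing is precisely the paper's Lemma~\ref{l:1}: if $B(X,s'+ts)=B(X,s')$ for all $t\in\R$, then $s$ is in the image of the state gauge map. This converse is what upgrades ``the weight is flat along gauge directions'' to ``the gauge directions are the \emph{only} flat directions,'' so that the hyperplane cut out by $\delta(\langle\lambda,t\rangle)$ --- which meets each gauge orbit in exactly one point by the defining property $\langle\lambda_v,bg\rangle=g(v)$ --- contains no direction along which the integrand fails to decay exponentially. The lemma is not a formality: the paper proves it by induction on face gluings, showing that a potential witnessing flatness on the disjoint tetrahedra descends through each identification (the little $\xi=3\xi$ computation), and your ``balancing/bookkeeping'' argument cannot substitute for it, since without it you cannot exclude accidental flat directions transverse to the gauge subspace. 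A cosmetic remark in the same step: the modulus decay of each $\gamma^{(2)}$ factor along real state directions is exponential-linear with rate controlled by the dihedral angles, not Gaussian --- the quadratic part of $B_{2,2}$ contributes only a phase --- but that does not affect integrability once the flat directions are correctly disposed of.
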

Several examples of explicit calculations make us to believe that for shaped triangulations of closed 3-manifolds, when the Teichm\"uller TQFT is defined as well, our partition function is twice  the absolute value squared of the partition function of the Techm\"uller TQFT. This is similar to the known relationship between the Turaev--Viro and  the Witten--Reshetikhin--Turaev invarints of three manifolds.
\begin{conjecture}\label{conj}
Let $(X,\ell_X)$ be an admissible shaped levelled branched triangulation of a closed oriented compact three manifold in the sense of \cite{Andersen:2011bt}. Then the following equality holds true
\beq
2\left|F_\hbar(X,\ell_X)\right|^2=W_b(X,\lambda)
\eeq
where $\hbar=(b+b^{-1})^{-2}\in\R_{>0}$.
\end{conjecture}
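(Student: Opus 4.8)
The plan is to prove the conjectured identity $2\left|F_\hbar(X,\ell_X)\right|^2=W_b(X,\lambda)$ by reducing it to a local, tetrahedron-by-tetrahedron statement and then propagating it through the gluing pattern of $X$. First I would recall the explicit form of the Andersen--Kashaev construction: each tetrahedron of $(X,\ell_X)$ contributes a single factor built from Faddeev's quantum dilogarithm $\Phi_b$, with arguments prescribed by the branching and levelling $\ell_X$, and $F_\hbar(X,\ell_X)$ is the resulting state integral along a real contour. Since $\left|F_\hbar\right|^2=F_\hbar\,\overline{F_\hbar}$ doubles the integration variables, the first and most important step is a factorization identity at the level of a single tetrahedron: the Boltzmann weight $B(T,s)=\prod_{q}\gamma^{(2)}(\dots)$, manifestly tetrahedrally symmetric and composed of three hyperbolic gamma functions, should be rewritten --- after a change of the state variables governed by $z_\alpha$ and the Neumann--Zagier data --- as the product of the Andersen--Kashaev holomorphic weight and its complex conjugate. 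Here I would exploit the standard relation between $\gamma^{(2)}(z;\omega_1,\omega_2)$ and $\Phi_b$ together with the inversion and functional relations of the double sine function.

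Second, I would match the two gluings. The integral defining $W_b$ runs over one real variable per interior edge, constrained by the gauge-fixing delta functions, whereas $F_\hbar\,\overline{F_\hbar}$ runs over two copies of the Andersen--Kashaev variables. The aim is a linear change of variables, dictated by the incidence combinatorics of $X$ and by $\ell_X$, that splits the $W_b$ integrand into a holomorphic and an antiholomorphic block and identifies them with $F_\hbar$ and $\overline{F_\hbar}$. Under this change of variables the delta functions $\delta\left(\langle\lambda,t\rangle\right)$ and the measure $dt$ should reproduce the contour prescription of the Teichm\"uller TQFT; tracking the resulting Jacobian and the residual normalization is where I expect the overall factor $2$ to appear, either from a leftover Gaussian integration or from a discrete symmetry pairing the two chiral halves of each tetrahedron.

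Third, because a direct global evaluation is out of reach, I would organize the argument as an induction through Pachner moves. Both sides are invariant under the shaped $3-2$ move --- for $W_b$ this is Theorem~\ref{main}, and for $\left|F_\hbar\right|^2$ it follows from the invariance of $F_\hbar$ established by Andersen and Kashaev --- so it suffices to verify the identity on one convenient triangulation in each move-class and to check that the local factorization of the first step is compatible with the integral pentagon identity satisfied by $\gamma^{(2)}$ and by $\Phi_b$. I would first confirm the factor $2$ and the whole scheme on the explicit examples already computed, and only then attempt the general local identity.

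The hard part will be the holomorphic--antiholomorphic splitting of the single absolutely convergent real state integral $W_b$ into $F_\hbar\,\overline{F_\hbar}$. Unlike the finite state-sum analogue, where Turaev--Viro $=\left|\mathrm{WRT}\right|^2$ follows from representation-theoretic shadow-world arguments, here one must carry out an explicit and delicate contour deformation on a high-dimensional integral, controlling the analytic continuation so that each separated factor converges in its own right and so that the split respects both the gluing and the branching data $\ell_X$. Moreover, the fact that the shaped $2-3$ move is not always applicable means the inductive reduction need not connect all triangulations of a given closed manifold, so a uniform proof will probably require either a direct proof of the local identity or an enlargement of the admissible class of moves.
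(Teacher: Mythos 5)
You should first be aware that the statement you set out to prove is an open conjecture in this paper: the authors offer no proof, only supporting evidence from the explicit computations in Section~\ref{examples}, where for the one-vertex H-triangulations of $(S^3,3_1)$, $(S^3,4_1)$, $(S^3,5_2)$ and $(S^3,6_1)$ the partition function is evaluated and seen to take the form $2\,|\cdot|^2$. Your text is likewise a strategy outline rather than a proof, and its load-bearing step is exactly the open problem. Concretely, your first step --- a pointwise, tetrahedron-by-tetrahedron factorization of $B(T,s)$ into the Andersen--Kashaev weight times its conjugate --- is not expected to hold and is nowhere exhibited in the paper. The Teichm\"uller TQFT tetrahedral weight is a distributional kernel built from $\Phi_b$ through a Fourier integral (the functions $\Psi$, $\psi$ of \eqref{eq:Psi}, \eqref{eq:SBP}), whereas $B(T,s)$ is an honest product of three hyperbolic gamma functions of the real state variables. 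In every worked example the modulus-squared structure emerges only \emph{globally}: one substitutes the Fourier representation \eqref{eq:SBP}, integrates out edge variables to produce delta functions, shifts contours, and --- crucially --- imposes the complete balancing conditions (e.g.\ $\beta_1=\gamma_1$, $\beta_2=\gamma_2$ for $4_1$) before the integral splits as $\left|\int\cdots\right|^2$. Without balancing, the result is a product of two \emph{different} chiral blocks, not a modulus squared, so any local splitting argument must already build in the global shape constraints; your proposal does not address this. Also, your expected origin of the factor $2$ is misplaced: in the computations it arises simply from the gauge-fixing delta function $\delta(t/2)$ (rescaling the fixed variable), not from a Jacobian of a chiral change of variables or a leftover Gaussian.

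Your third step cannot close the argument either, for the reason you yourself flag: shaped $2$--$3$ moves are not always applicable, so move-classes of shaped triangulations need not contain a computable base point, and the induction does not connect an arbitrary $(X,\ell_X)$ to the verified examples. Moreover $F_\hbar$ is only defined on \emph{admissible} levelled branched triangulations, so the intermediate triangulations traversed by a Pachner induction need not support both sides of the identity simultaneously. Since you offer no substitute for these two gaps --- the local-to-global factorization and the connectivity of the move graph --- the proposal leaves the statement exactly where the paper leaves it: a conjecture checked on examples. If you want to make progress, the productive target is a precise identity expressing the gauge-fixed integral \eqref{eq:pf} as a pairing of the Andersen--Kashaev partition function with its complex conjugate at the level of the full integral (with the balancing/shape constraints implemented by the delta functions), rather than at the level of a single tetrahedron.
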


The rest of this paper is organized as follows. Section~\ref{proof} contains the proof of the main Theorem~\ref{main}. In Section~\ref{s:pent}, we derive the pentagon identity which underlies the invariance of our partition function with respect to shaped $3-2$ Pachner move from the elliptic beta-integral.  In Section~\ref{examples} we provide examples of concrete calculations which justify Conjecture~\ref{conj}. Section~\ref{3dSUSY} is devoted to some considerations from the perspective of  $3d$ supersymmetric field theories. Namely, based on our construction we get a class of $3d$ supersymmetric field theories defined on a squashed three-sphere $S_b^3$ related to triangulated three-dimensional manifolds. The latter relation is known as $3d/3d$ correspondence which is the topic of recent study \cite{Terashima:2011qi,Dimofte:2011ju,Dimofte:2011py,Teschner:2012em}. Appendices contain some technical information on the special functions used.

\subsection*{Acknowledgements} We would  like to thank the organizers of following  events during which our collaboration
in this work  took place: the conference ``Groupes de diff\'eotopie et topologie quantique", Strasbourg, 25--29 June 2012;
 the ``International congress on mathematical physics", Aalborg, 6--11 August 2012; the
 workshop ``New Perspectives in Topological Field Theories",  Hamburg,
 27--31 August, 2012. We also appreciate the supports by Swiss National Science Foundation and
ITGP (Interactions of Low-Dimentional Topology and Geometry with
Mathematical Physics), an ESF RNP, and US National Science
Foundation.

\section{Proof of Theorem~\ref{main}}\label{proof}

\begin{lemma}\label{l:1}
Let $X$ be a shaped triangulation, and let $s$ and $s'$ be states of $X$ such that $B(X,s'+ts)=B(X,s')$ for any $t\in\R$. Then the state $s$ is in the image of the state  gauge map.
\end{lemma}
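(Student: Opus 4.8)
The plan is to prove the statement in two stages: first use the analytic properties of the hyperbolic gamma function to show that the hypothesis forces $s$ into the common kernel of the ``quad--difference'' functionals, and then show by a local--to--global argument that this kernel is exactly the image of the state gauge map. For each tetrahedron $T$ and quad $q\in\square(T)$, set $d^T_q(s):=\tilde s(q')-\tilde s(q'')$; this is precisely the combination through which $s$ enters the argument of the corresponding factor $\gamma^{(2)}$ in $B(T,s)$. Replacing $s'$ by $s'+ts$ shifts the argument of that factor by $\sqrt{-\omega_1\omega_2}\,t\,d^T_q(s)$, so $B(X,s'+ts)$ is a product of hyperbolic gamma functions whose arguments are affine in $t$ with slopes $\sqrt{-\omega_1\omega_2}\,d^T_q(s)$. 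Since a gauge variation $bg$ gives $\widetilde{bg}(q)=\sum_{v\in\Delta_0(T)}g(v)$ independently of $q$, every $d^T_q(bg)$ vanishes; the content of the lemma is the converse, which I obtain by showing the hypothesis forces $d^T_q(s)=0$ for all $T$ and $q$.

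\emph{Analytic step (the crux).} For real $t$ the arguments of all factors have real part $\tfrac{\omega_1+\omega_2}{\pi}\alpha(q)$, which lies in the strip where $\gamma^{(2)}$ is holomorphic and nonvanishing; hence $t\mapsto B(X,s'+ts)$ is holomorphic near the real axis, and being constant there it continues to the constant value $B(X,s')$ throughout its domain in $\mathbb{C}$. I then evaluate this constant along a ray $t=re^{i\theta}$, $r\to+\infty$, with $\theta$ chosen so that $\sqrt{-\omega_1\omega_2}\,e^{i\theta}$ is real (perturbing $\theta$ slightly if needed to avoid the poles and zeros of the factors); along such a ray the \emph{real} part of each argument tends to $+\infty$ or $-\infty$ according to the sign of $d^T_q(s)$. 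I use the asymptotics of the hyperbolic gamma function recorded in the appendix: in one horizontal direction $\gamma^{(2)}$ tends to a constant, while in the other it grows like the exponential of a pure-imaginary multiple of the square of its argument. Collecting the factors that run to the Gaussian end, the phase of $B(X,s'+ts)$ then contains a term proportional to $r^2\sum d^T_q(s)^2$, summed over exactly those quads; since a constant has bounded phase, this quadratic coefficient must vanish, so that set of quads is empty. Replacing $\theta$ by $\theta+\pi$ empties the complementary set, and therefore $d^T_q(s)=0$ for every tetrahedron $T$ and every quad $q$. The delicate points, which I expect to be the main obstacle, are justifying the continuation across the lattice of poles and zeros and controlling the Stokes-type switch of the asymptotic form between the two horizontal directions, so that the surviving quadratic phase has a definite, uncancellable sign.

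\emph{Combinatorial step.} It remains to show that $d^T_q(s)=0$ for all $q,T$ forces $s=bg$ for some state potential $g$. On a single abstract tetrahedron the conditions $s(01)+s(23)=s(02)+s(13)=s(03)+s(12)$ cut out a $4$-dimensional subspace, which coincides with the image of the injective gauge map on that tetrahedron; hence on each $T$ there is a unique potential $g_T$ on its four vertices with $b_Tg_T=s|_{\Delta_1(T)}$. On any triangular face the three equations $g(a)+g(b)=s(ab)$ have a nonsingular $3\times3$ coefficient matrix, so the potential on a triangle's vertices is already determined by $s$ on its edges; consequently the local potentials of two tetrahedra glued along a face agree on the shared vertices, and more generally agree along any chain of face identifications. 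Since every vertex identification of $X$ is generated by such chains, the values $g_T(v)$ descend to a well-defined function $g$ on $\Delta_0(X)$ (treating connected components separately), and $bg=s$ because the equality holds tetrahedron by tetrahedron. This exhibits $s$ in the image of the state gauge map and completes the proof.
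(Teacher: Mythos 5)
Your combinatorial step is sound and is, in substance, the paper's own argument: the paper proves the lemma by induction on face gluings, and its computation $g(v_i)-g(v_i')=\xi$ with $\xi=3\xi\Rightarrow\xi=0$ is precisely your observation that the system $g(a)+g(b)=s(ab)$ on a triangular face is nonsingular, so the unique local potentials agree across each glued face and descend to $\Delta_0(X)$. The paper dispatches what you call the analytic step in one line, as a ``straightforward verification'' for a disjoint union of tetrahedra, and it is exactly there that your proposal has a genuine gap.

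The gap is the asymptotic mechanism at your self-declared crux. The asymptotics are misquoted: $\gamma^{(2)}$ does not tend to a constant in one horizontal direction --- that behaviour belongs to $\Phi_b$. By the two formulas recorded in the paper, for $\omega_1,\omega_2>0$ one has $\gamma^{(2)}(u)\sim e^{-\frac{\pi\textup{i}}{2}B_{2,2}(u)}$ as $u\to\infty$ in the upper half plane and $\gamma^{(2)}(u)\sim e^{+\frac{\pi\textup{i}}{2}B_{2,2}(u)}$ in the lower half plane: Gaussian-type phases at \emph{both} ends, with \emph{opposite} exponents. Along your ray the quads with $d^T_q(s)>0$ and those with $d^T_q(s)<0$ run to opposite ends (any small tilt sends one family just above one half of the real axis and the other family into the opposite half plane), so the coefficient of $r^2$ in the total phase is proportional to $\sum_{d^T_q(s)>0}d^T_q(s)^2-\sum_{d^T_q(s)<0}d^T_q(s)^2$, not to $\sum_{q,T} d^T_q(s)^2$; replacing $\theta$ by $\theta+\pi$ merely negates the same quantity, and the modulus estimate at nonzero tilt produces the same signed difference. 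This difference vanishes identically on precisely the directions you must rule out: for the state $s$ on a single tetrahedron equal to $1$ on one edge and $0$ elsewhere --- which is not pure gauge, since $\tilde s=(1,0,0)$ is not constant over quads --- the slopes are $(0,-1,1)$, the two groups balance, and your leading term cancels, so no contradiction is obtained. Symptomatically, your crux never invokes $0<\alpha(q)<\pi$, whereas angle positivity is the actual engine of the lemma (and of the absolute convergence in Theorem 1): the obstruction lives in the linear-in-$r$ terms of $B_{2,2}$, where the angles enter. The repair is also a simplification: stay on the real axis, where the (legitimate, but unnecessary) meromorphic continuation is not needed. Writing $\gamma^{(2)}(\Delta\alpha+\textup{i}\nabla y)=e^{\textup{i}\pi x^2/2}\,\zeta_{inv}^{-1/2}\,\Phi_b(x)^{-1}$ with $x=-y+c_b\left(\frac{2\alpha}{\pi}-1\right)$ and using the two-sided behaviour of $\Phi_b$, one finds $\log\left|\gamma^{(2)}(\Delta\alpha+\textup{i}\nabla y)\right|=\pi|y|\,\frac{b+b^{-1}}{2}\left(\frac{2\alpha}{\pi}-1\right)+O(1)$ as $y\to\pm\infty$; summing over the quads of one tetrahedron, whose slopes add to zero, gives a strictly negative total rate --- e.g.\ slopes $(d_1,d_2,-d_1-d_2)$ with $d_1,d_2\ge0$ give rate $-(b+b^{-1})(d_1\alpha_2+d_2\alpha_1)$ --- whenever some $d^T_q(s)\neq0$. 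Hence $|B(X,s'+ts)|\to0$ as $t\to+\infty$, while $B(X,s')$ is finite and nonzero (its arguments have real parts in $(0,\omega_1+\omega_2)$, off the real lattice of poles and zeros), forcing $d^T_q(s)=0$ for all $T,q$; your combinatorial step then finishes the proof.
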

\begin{proof}
 By a straightforward verification,  the statement of the lemma is true if $X$ is a disjoint union of unglued tetrahedra. Thus, it suffices to prove that if triangulation $X$ is obtained   from  a triangulation $Y$  by  identification of two  triangular faces $f$ and $f'$, and the statement of the lemma is true for $Y$,  then it is also true for $X$.

Denote by $p\colon Y\to X$ the identification projection, and by $p^*\colon \R^{\Delta_i(X)}\to  \R^{\Delta_i(Y)}$ the corresponding pull-back maps. Let  $s$ and $s'$ be states of $X$ such that
\beq\label{eq:ginv}
B(X,s'+ts)=B(X,s'),\quad \forall t\in\R.
\eeq
 Using the fact that $B(X,r)=B(Y,p^*(r)) $ for any state $r$ of $X$, equation~\eqref{eq:ginv} is equivalent to
\beq
B(Y,p^*(s')+tp^*(s))=B(Y,p^*(s')),\quad \forall t\in\R.
\eeq
As we assume that the statement of the lemma is true for $Y$, there exists  $g\in\R^{\Delta_0(Y)}$ such that $p^*(s)=bg$. Let us show that there exists $g'\in\R^{\Delta_0(X)}$ such that $g=p^*(g')$. Indeed, let triangles $f$ and $f'$ have respective vertices $v_i$ and $v'_i$ and edges $e_i$ and $e'_i$ for $i\in\{1,2,3\}$ such that
\beq
\partial e_i=\{v_j,v_k\},\quad \partial e'_i=\{v'_j,v'_k\},\quad \{i,j,k\}=\{1,2,3\},
\eeq
and
\beq
p(e_i)=p(e'_i),\quad p(v_i)=p(v'_i),\quad i\in\{1,2,3\}.
\eeq
That means that when applied to edges $e_i$ and $e'_i$, the equality $p^*(s)=bg$ gives
\beq
g(v_j)+g(v_k)=g(v'_j)+g(v'_k)\Leftrightarrow g(v_i)-g(v_i')=\xi:=\sum_{m=1}^3(g(v_m)-g(v'_m)).
\eeq
Taking sum over $i$ in the last equation, we obtain
\(
\xi=3\xi\Leftrightarrow\xi=0
\)
which implies that $g(v_i)=g(v'_i)$ for any $i\in\{1,2,3\}$, i.e. $g=p^*(g')$.
Thus, we have the equality $p^*(s)=bp^*(g')=p^*(bg')$, and  as  $p^*$ is injective, we conclude that $s=bg'$.
\end{proof}
\begin{proof}[Proof of Theorem~\ref{main}]
By injectivity of  the state gauge map in the case of triangulations and Lemma~\ref{l:1}, the state gauge map image of the group $\R^{\Delta_0(\mathring{X})}$  is the maximal translation subgroup of the state space of $X$ which leaves invariant the boundary state $s$ and the Boltzmann weight $B(X,t)$. On the other hand,  the product of delta functions $\delta(\langle\lambda,t\rangle)$ restricts the integral to a hyperplane in the space $\R^{\Delta_1(\mathring{X})}\simeq b^{-1}(s)$ which intersects any orbit of this group action in a unique point, while the Boltzmann weight exponentially decays along  any direction  in this  hyperplane. This implies that the integral in~\eqref{eq:pf} is absolutely convergent.

Independence on the choice of the state gauge fixing $\lambda$ easily follows through the use of a simplest finite-dimensional version of the Faddeev--Popov trick in path integrals for gauge invariant systems\footnote{Similarly to QED, our system is linear and the Faddeev--Popov determinant is trivial so that no ghosts are needed.} \cite{FP}. Indeed, if $t$ is a state of $X$ and $\lambda'$ a state gauge fixing  in the interior of $X$, then we have the identity
\beq\label{eq:fp}
1=\int_{\R^{\Delta_0(\mathring{X})}}\delta\left(\langle\lambda',t+bg\rangle\right)dg,
\eeq
where
\beq
dg:=\prod_{v\in \Delta_0(\mathring{X})}dg(v).
\eeq
Inserting \eqref{eq:fp} into \eqref{eq:pf}, exchanging the order of integrations, shifting the  integration state variables,  using the gauge invariance of the Boltzmann weight, again exchanging the order of integrations, and again using identity~\eqref{eq:fp} with $\lambda'$ replaced by $\lambda$,  we  obtain
\begin{multline}
W_b(X,s,\lambda)=
\int_{\partial^{-1}(s)} B(X,t)\delta\left(\langle\lambda,t\rangle\right)dt=\\
\int_{\partial^{-1}(s)} B(X,t)\delta\left(\langle\lambda,t\rangle\right)\left( \int_{\R^{\Delta_0(\mathring{X})}}\delta\left(\langle\lambda',t+bg\rangle\right)dg\right)dt\\
=
\int_{\R^{\Delta_0(\mathring{X})}}\left(\int_{\partial^{-1}(s)} B(X,t)\delta\left(\langle\lambda,t\rangle\right) \delta\left(\langle\lambda',t+bg\rangle\right)dt\right)dg\\
=
\int_{\R^{\Delta_0(\mathring{X})}}\left(\int_{\partial^{-1}(s)} B(X,t-bg)\delta\left(\langle\lambda,t-bg\rangle\right) \delta\left(\langle\lambda',t\rangle\right)dt\right)dg\\
=
\int_{\R^{\Delta_0(\mathring{X})}}\left(\int_{\partial^{-1}(s)} B(X,t)\delta\left(\langle\lambda,t-bg\rangle\right) \delta\left(\langle\lambda',t\rangle\right)dt\right)dg\\
=
\int_{\partial^{-1}(s)} B(X,t)\delta\left(\langle\lambda',t\rangle\right)\left( \int_{\R^{\Delta_0(\mathring{X})}}\delta\left(\langle\lambda,t-bg\rangle\right)dg\right)dt\\
=
\int_{\partial^{-1}(s)} B(X,t)\delta\left(\langle\lambda',t\rangle\right)dt=W_b(X,s,\lambda').
\end{multline}

 Invariance under $3-2$ shaped Pachner moves is a consequences of the shaped pentagon identity for the tetrahedral Boltzmann weights, which in its turn is equivalent to identity~\eqref{Pent1}, provided the relevant integration variable does not enter the product of delta-functions
$\delta\left(\langle\lambda,t\rangle\right)$. This condition can always be satisfied by appropriate choice of $\lambda$.

Finally, the gauge transformation in the space of dihedral angles induced by an edge $e$, see \cite{Andersen:2011bt}, is equivalent to an imaginary shift of the integration variable $s(e)$, which, by using  the holomorphicity of the Boltzmann weights, can be compensated by an imaginary shift of the integration path in the complex $s(e)$-plane.
\end{proof}

\section{Pentagon identities from elliptic beta-integral}\label{s:pent}

Let us start from Spiridonov's elliptic beta-integral \cite{S1}
\beq \label{BI}
\kappa \int_{\mathbb{T}} \frac{\prod_{i=1}^6 \Gamma(s_i z^{\pm1};p,q)}{\Gamma(z^{\pm2};p,q)} \frac{dz}{2 \pi \textup{i} z} = \prod_{1 \leq i < j \leq 6} \Gamma(s_i s_j;p,q),
\eeq
where parameters $\s = \{s_1,\ldots,s_6\}$ satisfy the so-called balancing condition $\prod_{i=1}^6 s_i = pq$. Here
$$
\kappa \ = \ \frac{(p;p)_\infty (q;q)_\infty}{2},
$$
where $(z;p)_\infty = \prod_{i=0}^\infty (1-zp^i)$. Also in (\ref{BI}) the building block is the elliptic gamma function defined as
\beq \label{EGF}
\Gamma(z;p,q) = \prod_{i,j=0}^\infty \frac{1-z^{-1} p^{i+1} q^{j+1}}{1-zp^iq^j},
\eeq
with $|z|<1$ and two basis parameters $|p|,|q|<1$. Here we use the following useful conventions
$$
\Gamma(a,b;p,q) = \Gamma(a;p,q) \Gamma(b;p,q), \ \ \ \Gamma(az^{\pm1};p,q) = \Gamma(az;p,q) \Gamma(az^{-1};p,q).
$$

The elliptic gamma function has the following limit when all its parameters and the two basis paramteres simultaneously go to unity \cite{Rui}
\beq \label{lim} \Gamma(e^{2 \pi \textup{i} r z};e^{2 \pi \textup{i} r \omega_1},
e^{2 \pi \textup{i} r \omega_2}) \stackreb{=}{r \rightarrow 0}
e^{-\pi \textup{i}(2z-\omega_1-\omega_2)/12r} \gamma^{(2)}(z;\omega_1,\omega_2),\eeq
where on the right hand side one has the so-called hyperbolic gamma function
\beq \label{HGF}
\gamma^{(2)}(u;\omega_1,\omega_2) = e^{-\pi \textup{i}
B_{2,2}(u;\omega_1,\omega_2)/2} \frac{(e^{2 \pi i u/\omega_1}
\widetilde{q};\widetilde{q})_\infty}{(e^{2 \pi i
u/\omega_2};q)_\infty},\eeq
with the redefined basis parameters
\beqa\nonumber
 q = e^{2 \pi i \omega_1/\omega_2},\qquad  \widetilde{q} = e^{-2 \pi i
\omega_2/\omega_1},\eeqa and $B_{2,2}(u;\omega_1,\omega_2)$
denoting the
second order Bernoulli polynomial, \beq B_{2,2}(u;\omega_1,\omega_2) =
\frac{u^2}{\omega_1\omega_2} - \frac{u}{\omega_1} -
\frac{u}{\omega_2} + \frac{\omega_1}{6\omega_2} +
\frac{\omega_2}{6\omega_1} + \frac 12.\eeq
The conventions,
\[
\gamma^{(2)}(a,b;\omega_1,\omega_2) \equiv
\gamma^{(2)}(a;\omega_1,\omega_2) \gamma^{(2)}(b;\omega_1,\omega_2),
\]
and
\[
\gamma^{(2)}(a\pm u;\omega_1,\omega_2) \equiv
\gamma^{(2)}(a+u;\omega_1,\omega_2)
\gamma^{(2)}(a-u;\omega_1,\omega_2),
\]
are applied further.

Also we are going to use the following reduction of the hyperbolic gamma function \cite{Rui}
\beq \label{GF}
\gamma^{(2)}(z;\omega_1,\omega_2) \stackreb{=}{\omega_2 \rightarrow \infty}
\left(\frac{\omega_2}{2 \pi \omega_1}\right)^{\frac 12 - \frac{z}{\omega_1}}
\frac{\Gamma(z/\omega_1)}{\sqrt{2 \pi}},
\eeq
where $\Gamma(u)$ is the usual gamma function. The relation between the hyperbolic gamma function and Faddeev's quantum dilogarithm is presented  in the Appendix where we collect all the definitions and properties of the special functions.

\subsection{The first pentagon}
Let us start from elliptic beta-integral (\ref{BI}) and reparametrize parameters as
$$
s_i = e^{2 \pi \textup{i} v \alpha_i}, i=1,\ldots,6; \ z= e^{2 \pi \textup{i} v u}; \ p = e^{2 \pi \textup{i} v \omega_1}; \ q = e^{2 \pi \textup{i} v \omega_2},
$$
and use the limit of elliptic gamma function (\ref{lim}) to get
\beq \label{HBI}
\frac 12 \int_{-\textup{i} \infty}^{\textup{i} \infty} \frac{\prod_{i=1}^6 \gamma^{(2)}(\alpha_i \pm u;\omega_1,\omega_2)}{\gamma^{(2)}(\pm 2u;\omega_1,\omega_2)} \frac{du}{\textup{i} \sqrt{\omega_1\omega_2}} = \prod_{1 \leq i < j \leq 6} \gamma^{(2)}(\alpha_i + \alpha_j;\omega_1,\omega_2),
\eeq
where the balancing condition becomes $\sum_{i=1}^6 \alpha_i = \omega_1+\omega_2$.

To get the new form of the pentagon one should proceed as follows. Let us take reparameterization \cite{Spiridonov:2010em}
\beq
\alpha_i = \mu + a_i, \alpha_{i+3} = -\mu + b_i, i=1,2,3,
\eeq
which preserves the balancing condition and consider the limit $\mu \rightarrow \infty$.
We use the inversion relation
\beq \label{Inv}
\gamma^{(2)}(z,\omega_1+\omega_2-z;\mathbf{\omega_1},
\mathbf{\omega_2}) = 1 \eeq
 and the asymptotic formulas \beqa \makebox[-1.25em]{} \lim_{u
\rightarrow \infty} e^{\frac{\pi \textup{i}}{2}
B_{2,2}(u;\mathbf{\omega})} \gamma^{(2)}(u;\mathbf{\omega})
& = & 1, \text{ \ \ for } \text{arg }\omega_1 < \text{arg } u < \text{arg }\omega_2 + \pi, \nonumber \\
\lim_{u \rightarrow \infty}e^{-\frac{\pi \textup{i}}{2} B_{2,2}(u;\mathbf{\omega})} \gamma^{(2)}(u;\mathbf{\omega})
& = & 1, \text{  \ \ for } \text{arg } \omega_1 - \pi < \text{arg } u <
\text{arg }\omega_2,
\eeqa
and shifting the integration variable $u \rightarrow u+\mu$ to get
\beq \label{Pent1_p}
\int_{-\textup{i} \infty}^{\textup{i} \infty} \prod_{i=1}^3 \gamma^{(2)}(a_i - u, b_i + u;\omega_1,\omega_2) \frac{du}{\textup{i} \sqrt{\omega_1\omega_2}} = \prod_{i,j=1}^3 \gamma^{(2)}(a_i+b_j;\omega_1,\omega_2),
\eeq
with $\sum_{i=1}^3 (a_i+b_i) = \omega_1+\omega_2$.
Let us introduce now the following function \beq \label{HB}
\mathcal{B}(x,y) =
\frac{\gamma^{(2)}(x,y;\omega_1,\omega_2)}{\gamma^{(2)}(x+y;\omega_1,\omega_2)},
\eeq after which we rewrite (\ref{Pent1_p}) as \beq \label{Pent1}
\int_{-\textup{i} \infty}^{\textup{i} \infty} \prod_{i=1}^3
\mathcal{B}(a_i - u,b_i + u) \frac{du}{\textup{i}
\sqrt{\omega_1\omega_2}} = \mathcal{B}(a_2+b_1,a_3+b_2)
\mathcal{B}(a_1+b_2,a_3+b_1), \eeq where we used the inversion
relation for the hyperbolic gamma function. The geometric meaning of
the pentagon relation (\ref{Pent1}) can be seen in the figure
below.  Note that by the inversion formula (\ref{Inv}), we have
$$\mathcal{B}(x,y) =\gamma^{(2)}(x;\omega_1,\omega_2)
\gamma^{(2)}(y; \omega_1,
\omega_2)\gamma^{(2)}(\omega_1+\omega_2-x-y;\omega_1, \omega_2).$$
Therefore, the right hand-side of (\ref{Pent1}) is the
Boltzmann weight for the union of two tetrahedra and the
left-hand-side of (\ref{Pent1}) is the integration of the
Boltzmann weight of three tetrahedra.
\begin{figure}[ht!]
\centering
\includegraphics[scale=0.55]{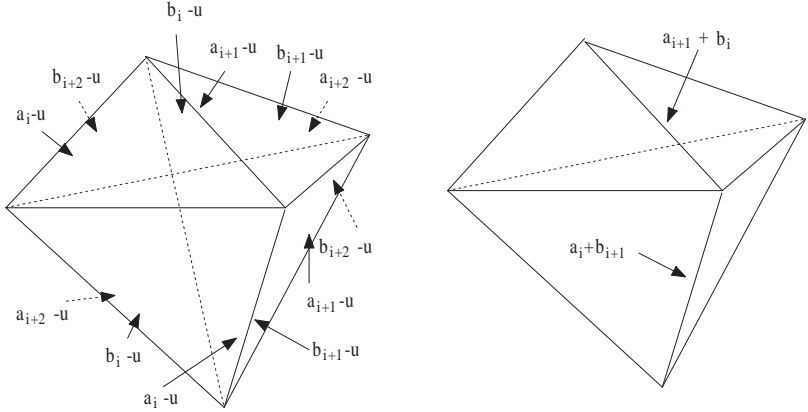}
\caption{2-3 moves} \label{figure 1}
\end{figure}

From (\ref{Pent1}) we see that the function $\mathcal{B}$ satisfies pentagon identity. It will be natural to suggest that the original elliptic beta-integral should also satisfy some kind of pentagon identity, but to our knowledge it is not realized so far. Recently in papers \cite{Bazhanov:2011mz} it was realized that the elliptic beta-integral satisfies the Yang-Baxter star-triangle relation (see also \cite{Spiridonov:2010em}) with the Boltzman weight $W_\alpha(x,y)=\Gamma(e^\alpha x^{\pm1} y^{\pm1};p,q)$. So, instead of $2-3$ Pachner move the elliptic beta-integral (\ref{BI}) satisfies $3-3$ Pachner move \cite{Korepanov} which might be relevant for construction of quantum invariants  of four-dimensional manifolds. Moreover,  the elliptic hypergeometric integrals describe specific partition functions of $4d$ $\mathcal{N}=1$ SYM theories known as superconformal indices \cite{Dolan:2008qi,SV2}. Combining these facts together one expects that triangulations of four-dimensional manifolds can be connected to four-dimensional $\mathcal{N}=1$ supersymmetric field theories.

One has the following orthogonality relations for the $\mathcal{B}$ function:
\beq \label{ORH}
\int_{\R} \mathcal{B}(a - \textup{i} u,b + \textup{i} u) \mathcal{B}(-a - \textup{i} u,-b + \textup{i} u) \frac{du}{\sqrt{\omega_1\omega_2}} = 2 \textup{i} \sqrt{\omega_1 \omega_2} \delta(a-b),
\eeq
\beq
\nonumber \makebox[-2em]{}\int_{\R} \mathcal{B}(a - \textup{i} u,a + \textup{i} u) \mathcal{B}(-a - \textup{i} (u+b),-a + \textup{i} (u+b)) \frac{du}{\sqrt{\omega_1\omega_2}} = 2 \sqrt{\omega_1 \omega_2} \delta(b).
\eeq

\subsection{The second pentagon}
Let us rewrite (\ref{Pent1}) as
\begin{multline} \label{Pent11}
\int_{-\textup{i} \infty}^{\textup{i} \infty} \frac{\prod_{i=1}^3 \gamma^{(2)}(a_i - u;\omega_1,\omega_2)  \prod_{i=1}^2 \gamma^{(2)}(b_i + u;\omega_1,\omega_2)}{\gamma^{(2)}(\sum_{i=1}^3 a_i + b_1 + b_2 - u;\omega_1,\omega_2)} \frac{du}{\textup{i} \sqrt{\omega_1\omega_2}}\\ = \prod_{i,j=1}^3 \gamma^{(2)}(a_i+b_j;\omega_1,\omega_2),
\end{multline}
Applying the limit $\omega_2 \rightarrow \infty$ to (\ref{Pent11}) and using (\ref{GF}) we get
\beqa \label{Pent2}
&& \int_{-\textup{i} \infty}^{\textup{i} \infty} B(a_1+u,b_1-u) B(a_2+u,b_2-u) B(a_3+u,a_1+a_2+b_1+b_2) \frac{du}{2 \pi \textup{i}} \nonumber \\ && \makebox[6em]{} = B(a_2+b_1,a_3+b_2) B(a_1+b_2,a_3+b_1),
\eeqa
where $B(x,y)$ is the usual beta-function
\beq \label{B}
B(x,y) \ = \ \frac{\Gamma(x) \Gamma(y)}{\Gamma(x+y)}.
\eeq

Taking the limit $\omega_2 \rightarrow \infty$ in (\ref{ORH}) one gets analogous orthogonality relations for the beta-integral. 

\section{Examples of calculations}\label{examples}
We will use the following notation
\beq
\Delta:=(\omega_1+\omega_2)/\pi,\quad \nabla:=\sqrt{\omega_1\omega_2},
\eeq
and also
\beq
u(x):=c_b\left(1-\frac x\pi\right).
\eeq
Further we will use the $\psi$ function defined as
\beq\label{eq:SBP}
\psi(x,y):=\Psi(x,-x,y)=\int_\R\frac{\Phi_b(t+x)}{\Phi_b(t-x)}e^{2\pi i yt}dt
\eeq
see also \eqref{eq:Psi}. We also have the equality
\beq
\mathcal{B}\left(\Delta \alpha+i\nabla x,\Delta\beta+i\nabla y\right) = \psi\left(u(\alpha)+\frac x2,y-2c_b\frac\beta\pi\right)
\eeq
which is equivalent to \eqref{eq:BPsi}.

\subsection{One vertex H-triangulation of $(S^3,3_1)$}
Following \cite{Andersen:2011bt}, we have one  tetrahedron $T$ with linearly ordered vertices (enumerated by $0,1,2,3$) and with the face identifications
\beq \label{ident}
\partial_i T \simeq \partial_{3-i} T, \ i \in {0,1},
\eeq
represented by diagram
\begin{figure}[ht!]
\centering
\includegraphics[scale=1]{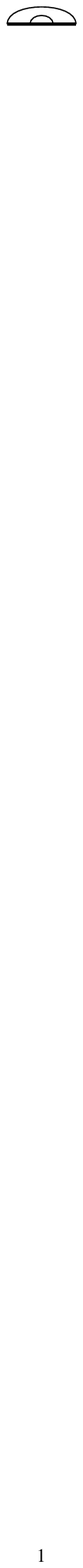}
\end{figure}

The quotient space $X$ is a triangulation of $S^3$ with only one vertex $v$ and two edges: $e_1$ knotted like trefoil and having as preimage the only egde $03$ of $T$, and  $e_2$ having  as preimages all other five edges of $T$.
The Boltzmann weight reads
\beq
B(X,s)=\mathcal{B}\left(\Delta \alpha_0,\Delta \alpha_1 + i\nabla (s_2-s_1)\right)
\eeq
where $\alpha_i:=\alpha(q_i)$ with $q\in\square(T)$ being  the quad corresponding to the opposite edge pair $(03,12)$ of $T$, and $s_i:=s(e_i)$.
Choosing the gauge fixing map $\lambda$ so that $\langle \lambda_v,s\rangle=s_1/2$, we obtain the following  integral  for the partition function:
\begin{multline} \label{int1}
W_b(S^3,3_1) = \int_{\R^2} \mathcal{B}\left(\Delta \alpha_0,\Delta \alpha_1 + i\nabla (s_2-s_1)\right) \delta(s_1/2) ds_1 ds_2\\=
2 \int_{\R} \mathcal{B}\left(\Delta \alpha_0,\Delta \alpha_1 + i\nabla s_2\right) ds_2
\end{multline}
which under substitution of \eqref{eq:SBP} is calculated as follows
\begin{multline}
W_b(S^3,3_1) =
2 \int_{\R} \psi\left(u(\alpha_0),s_2-2c_b\frac{\alpha_1}\pi\right) ds_2\\=
2 \int_{\R^2}\frac{\Phi_b( t+u(\alpha_0))}{\Phi_b( t-u(\alpha_0))}e^{2\pi i\left(s_2-2c_b\frac{\alpha_1}\pi\right)t} dt ds_2=
2 \int_{\R}\frac{\Phi_b( t+u(\alpha_0))}{\Phi_b( t-u(\alpha_0))}\delta(t)e^{-4ic_b\alpha_1t} dt\\
=2\frac{\Phi_b( u(\alpha_0))}{\Phi_b(-u(\alpha_0))}=2\left|\Phi_b( u(\alpha_0))\right|^2.
\end{multline}
As in \cite{Andersen:2011bt}, the partition function diverges in the H-balanced limit $\alpha_0\to0$, so that it makes sense only to consider the ratios of partition functions. Thus, we define the renormalized partition function $\tilde W_b(S^3,3_1)=1$.

\subsection{One vertex H-triangulation of $(S^3,4_1)$}

In the graphical notation of \cite{Andersen:2011bt}, let $X$ be given by the diagram
\begin{figure}[ht!]
\centering
\includegraphics[scale=1]{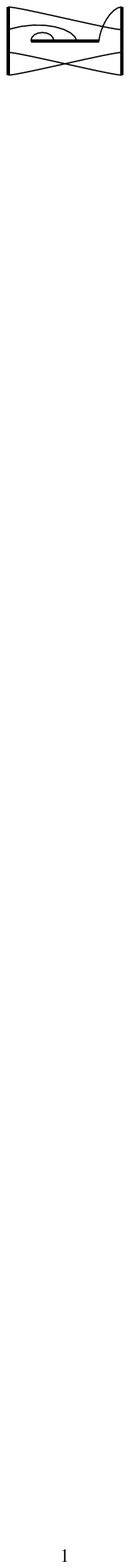}
\end{figure}
where the figure-eight knot is represented by the edge of the central tetrahedron $T$ connecting the maximal and the next to maximal vertices. This  H-triangulation of $(S^3,4_1)$ consists of two positive $T$ and $T_L$ to the left from the central tetrahedron and one negative $T_R$-- to the right. One has the following identification of the faces
\beqa \label{ident1}
&& \partial_0 T \simeq \partial_1 T, \ \partial_2 T \simeq \partial_1 T_L, \ \partial_3 T \simeq \partial_3 T_R, \nonumber \\ && \partial_0 T_L \simeq \partial_2 T_R, \ \partial_2 T_L \simeq \partial_0 T_R, \ \partial_3 T_L \simeq \partial_1 T_R.
\eeqa
Identifying the corresponding edges, one gets
\beqa \label{ident_edge}
&& z \equiv x_{13} = x_{03} = x_{23}^{L} = x_{03}^L = x_{13}^{R}, \nonumber \\ && y \equiv x_{12} = x_{02} = x_{01}^L = x_{02}^R = x_{12}^{R}, \nonumber \\ && x \equiv x_{01} = x_{02}^L = x_{12}^{L} = x_{13}^{L} = x_{01}^R = x_{03}^R = x_{23}^{R},
\eeqa
and one has also the edge $x' \equiv x_{23}$. Then for the partition function we have
\begin{multline*}
W_b(S^3,4_1)=\int_{\R^4}\mathcal{B}(\Delta\alpha_1,\Delta\alpha_2+\textup{i}\nabla(y+z-x-x'))\\
\times\mathcal{B}(\Delta\beta_1+\textup{i}\nabla(x-z),\Delta\beta_2+\textup{i}\nabla(x-y))\\
\times\mathcal{B}(\Delta\gamma_1+\textup{i}\nabla(x-z),\Delta\gamma_2+\textup{i}\nabla(x-y))\delta(x/2)dxdydzdx'\\
=2\int_{\R^3}\mathcal{B}(\Delta\alpha_1,\Delta\alpha_2+\textup{i}\nabla(y+z-x'))\\
\times\mathcal{B}(\Delta\beta_1-\textup{i}\nabla z,\Delta\beta_2-\textup{i}\nabla y)\mathcal{B}(\Delta\gamma_1-\textup{i}\nabla z,\Delta\gamma_2-\textup{i}\nabla y)dydzdx'\\
=2\tilde W_b(S^3,4_1)\int_{\R}\mathcal{B}(\Delta\alpha_1,\Delta\alpha_2+\textup{i}\nabla t)dt
=2\tilde W_b(S^3,4_1)|\Phi_b(u(\alpha_1))|^2
\end{multline*}
where
\begin{multline}
\tilde W_b(S^3,4_1):=\frac{ W_b(S^3,4_1)}{2|\Phi_b(u(\alpha_1)|^2}\\
=\int_{\R^2}\mathcal{B}(\Delta\beta_1-\textup{i}\nabla z,\Delta\beta_2-\textup{i}\nabla y)\mathcal{B}(\Delta\gamma_1-\textup{i}\nabla z,\Delta\gamma_2-\textup{i}\nabla y)dydz.
\end{multline}
 Now, using~\eqref{eq:SBP} and \eqref{eq:Psi} we continue as follows:
\begin{multline*}
\tilde W_b(S^3,4_1)=\int_{\R^2}\psi\left(u(\beta_1)-\frac z2,-2c_b\frac{\beta_2}\pi-y\right)\psi\left(u(\gamma_1)-\frac z2,-2c_b\frac{\gamma_2}\pi-y\right)dydz\\=
\int_{\R^4} \frac{\Phi_b\left(u(\beta_1)-\frac z2+s\right)\Phi_b\left(u(\gamma_1)-\frac z2+t\right)}{\Phi_b\left(-u(\beta_1)+\frac z2+s\right)\Phi_b\left(-u(\gamma_1)+\frac z2+t\right)}e^{-2\pi \textup{i}(s+t)y-4\textup{i}c_b(s\beta_2+t\gamma_2)}dsdtdydz\\=
\int_{\R^3} \frac{\Phi_b\left(u(\beta_1)-\frac z2+s\right)\Phi_b\left(u(\gamma_1)-\frac z2+t\right)}{\Phi_b\left(-u(\beta_1)+\frac z2+s\right)\Phi_b\left(-u(\gamma_1)+\frac z2+t\right)}e^{-4 \textup{i}c_b(s\beta_2+t\gamma_2)}\delta(s+t)dsdtdz\\=
\int_{\R^2} \frac{\Phi_b\left(u(\beta_1)-\frac z2-t\right)\Phi_b\left(u(\gamma_1)-\frac z2+t\right)}{\Phi_b\left(-u(\beta_1)+\frac z2-t\right)\Phi_b\left(-u(\gamma_1)+\frac z2+t\right)}e^{4\textup{i}c_bt(\beta_2-\gamma_2)}dtdz\\=\{t\mapsto t+\frac z2\}=
\int_{\R^2} \frac{\Phi_b\left(u(\beta_1)-z-t\right)\Phi_b\left(u(\gamma_1)+t\right)}{\Phi_b\left(-u(\beta_1)-t\right)\Phi_b\left(-u(\gamma_1)+z+t\right)}e^{4\textup{i}c_b(t+\frac z2)(\beta_2-\gamma_2)}dtdz\\=\{z\mapsto z-t\}=
\int_{\R^2} \frac{\Phi_b\left(u(\beta_1)-z\right)\Phi_b\left(u(\gamma_1)+t\right)}{\Phi_b\left(-u(\beta_1)-t\right)\Phi_b\left(-u(\gamma_1)+z\right)}e^{2 \textup{i}c_b(t+ z)(\beta_2-\gamma_2)}dtdz\\=
\left|\int_{\R} \frac{\Phi_b\left(u(\beta_1)-z\right)}{\Phi_b\left(-u(\gamma_1)+z\right)}e^{2 \textup{i}c_b(\beta_2-\gamma_2)z}dz\right|^2.
\end{multline*}
As the complete balancing conditions take the form $\beta_1=\gamma_1$ and $\beta_2=\gamma_2$, we finally obtain
\beq
\tilde W_b(S^3,4_1)=\left|\int_{\R-i0} \frac{\Phi_b\left(-z\right)}{\Phi_b\left(z\right)}dz\right|^2.
\eeq

\subsection{One vertex H-triangulation of $(S^3,5_2)$}
In the graphical notation of \cite{Andersen:2011bt}, let $X$ be given by the diagram \vspace{.2cm}
\begin{figure}[ht!]
\centering
\includegraphics[scale=1]{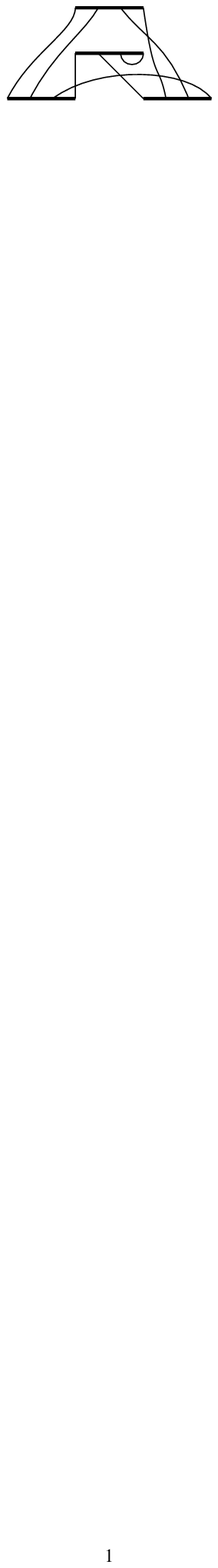}
\end{figure}

One vertex H-triangulation of $(S^3,5_2)$ consists of 4 tetrahedra: $T$ which is a negative tetrahedron and is sitting in the center of the above picture, and $T_1, T_2, T_3$ are positive tetrahedra ($T_1$ is on the left from the central tetrahedron, $T_2$ on the right and $T_3$ is on top). One has to identify the following faces of four tetrahedra:
\beqa
&& \partial_0 T \simeq \partial_1 T, \ \partial_2 T \simeq \partial_0 T_2, \ \partial_3 T \simeq \partial_3 T_1, \ \partial_0 T_1 \simeq \partial_3 T_3, \nonumber \\ && \partial_1 T_1 \simeq \partial_2 T_3, \ \partial_2 T_1 \simeq \partial_3 T_2, \ \partial_1 T_2 \simeq \partial_0 T_3, \ \partial_2 T_2 \simeq \partial_1 T_3.
\eeqa
From the identification of the faces we get the following equalities for the edges
\beqa
&& x_{01} = x_{12}^{(2)} = x_{01}^{(1)} = x_{01}^{(2)} = x_{02}^{(3)} = x_{13}^{(1)}; \nonumber \\ && x_{02} = x_{12} = x_{12}^{(1)} = x_{01}^{(3)} = x_{02}^{(1)}; \nonumber \\ && x_{03} = x_{13} = x_{23}^{(2)} = x_{23}^{(3)} = x_{13}^{(2)}; \nonumber \\ && x_{03}^{(1)} = x_{02}^{(2)} = x_{23}^{(1)} = x_{12}^{(3)} = x_{13}^{(3)} = x_{03}^{(2)} = x_{03}^{(3)},
\eeqa
and just $x_{23}$. Here, the superscripts $(1)$, $(2)$ and $(3)$ refer to tetrahedra $T_i, i=1,2,3$.

The partition function for this one vertex H-triangulation of $(S^3,5_2)$ is equal to the integral of the product of four Boltzmann weights of four tetrahedra
\beqa
&& W_b(S^3,5_2) = \int_{\R^5} \mathcal{B}(\Delta \alpha_1,\Delta \alpha_2 + \textup{i} \nabla (x_1+x_{23}-x_3-x_2)) \nonumber \\ \nonumber && \makebox[0em]{} \times \mathcal{B}(\Delta \beta_1 - \textup{i} \nabla (x_3' - x_1), \Delta \beta_2 - \textup{i} \nabla  (x_1 - x_2)) \\ \nonumber && \makebox[0em]{} \times \mathcal{B}(\Delta \gamma_1 - \textup{i} \nabla  (x_1 - x_3), \Delta \gamma_2 - \textup{i} \nabla  (x_3 - x_3')) \nonumber \\ && \makebox[0em]{} \times \mathcal{B}(\Delta \delta_1 - \textup{i} \nabla (x_3 - x_1), \Delta \delta_2 - \textup{i} \nabla (x_2 + x_3 - 2 x_3')) \delta(x_3'/2) dx_{23} dx_1 dx_2 dx_3 dx_3' \nonumber \\ && \makebox[2em]{} = 2 \tilde W_b(S^3,5_2) \int_\R \mathcal{B}(\Delta \alpha_1,\Delta \alpha_2 + \textup{i} \nabla t) dt \nonumber \\ && \makebox[2em]{} = 2 \tilde W_b(S^3,5_2) \left| \Phi_b(u(\alpha_1)) \right|^2,
\eeqa
where $x_i = x_{0i}, i=1,2,3$ and also we denote $x_3' = x_{03}^{(1)}$.
Here one has
\beqa
&& \nonumber \makebox[-2em]{} \tilde W_b(S^3,5_2) = \int_{\R^3} dx_1 dx_2 dx_3 \mathcal{B}(\Delta \beta_1 + \textup{i} \nabla x_1, \Delta \beta_2 - \textup{i} \nabla (x_1 - x_2)) \nonumber \\ && \makebox[-0.6em]{} \times \mathcal{B}(Q-\Delta \gamma_1-\Delta \gamma_2+ \textup{i} \nabla x_1, \Delta \gamma_2 - \textup{i} \nabla x_3) \mathcal{B}(\Delta \delta_1 + \textup{i} \nabla x_1, \Delta \delta_2 - \textup{i} \nabla (x_2 + x_3)) \nonumber \\ && \makebox[-2.3em]{} = \int_{\R^3} \psi \left( u(\beta_1) + \frac{x_1}{2}, - x_1 + x_2 - 2c_b\frac{\beta_2}{\pi} \right)  \psi \left( u(\pi-\gamma_1-\gamma_2) + \frac{x_1}{2}, - x_3 - 2c_b \frac{\gamma_2}{\pi} \right) \nonumber \\ && \makebox[2em]{} \times \psi \left( u(\delta_1) + \frac{x_1}{2}, - x_2 - x_3 - 2c_b \frac{\delta_2}{\pi} \right) dx_1 dx_2 dx_3,
\eeqa
then writing the definition for $\psi(a,b)$ one gets
\beqa
&& \makebox[-2em]{} \tilde W_b(S^3,5_2) = \int_{\R^6} dx_1 dx_2 dx_3 ds dt du  \nonumber \\ && \makebox[-1.4em]{} \times \frac{\Phi_b(u(\beta_1) + \frac{x_1}{2} + s)\Phi_b(u(\pi-\gamma_1-\gamma_2) + \frac{x_1}{2} + t)\Phi_b(u(\delta_1) + \frac{x_1}{2} + u)}{\Phi_b(-u(\beta_1) - \frac{x_1}{2} + s) \Phi_b(-u(\pi-\gamma_1-\gamma_2) - \frac{x_1}{2} + t)\Phi_b(-u(\delta_1) - \frac{x_1}{2} + u)} \nonumber \\ && \makebox[-1em]{} \times e^{2 \pi \textup{i} (- x_1 + x_2)s - 4 \textup{i} c_b \beta_2 s} e^{-2 \pi \textup{i} x_3 t - 4 \textup{i} c_b \gamma_2 t} e^{-2 \pi \textup{i} (x_2 + x_3)u - 4 \textup{i} c_b \delta_2 u} \nonumber \\ && \makebox[0.5em]{} = \int_{\R^4} dx_1 ds dt du \frac{\Phi_b(u(\beta_1) + \frac{x_1}{2} + s)}{\Phi_b(-u(\beta_1) - \frac{x_1}{2} + s)} \frac{\Phi_b(u(\pi-\gamma_1-\gamma_2) + \frac{x_1}{2} + t)}{\Phi_b(-u(\pi-\gamma_1-\gamma_2) - \frac{x_1}{2} + t)} \nonumber \\ && \makebox[1em]{} \times \frac{\Phi_b(u(\delta_1) + \frac{x_1}{2} + u)}{\Phi_b(-u(\delta_1) - \frac{x_1}{2} + u)} e^{-2 \pi \textup{i} x_1 s - 4 \textup{i} c_b \beta_2 s} e^{- 4 \textup{i} c_b \gamma_2 t} e^{- 4 \textup{i} c_b \delta_2 u} \nonumber \\ && \makebox[2em]{} \times \int_\R dx_2 e^{2 \pi \textup{i} (s-u) x_2} \int_\R dx_3 e^{-2 \pi \textup{i} (t+u) x_3} \nonumber \\ && \makebox[1em]{} = \int_{\R^4} dx_1 ds dt du \frac{\Phi_b(u(\beta_1) + \frac{x_1}{2} + s)}{\Phi_b(-u(\beta_1) - \frac{x_1}{2} + s)} \frac{\Phi_b(u(\pi-\gamma_1-\gamma_2) + \frac{x_1}{2} + t)}{\Phi_b(-u(\pi-\gamma_1-\gamma_2) - \frac{x_1}{2} + t)} \nonumber \\ && \makebox[-1em]{} \times \frac{\Phi_b(u(\delta_1) + \frac{x_1}{2} + u)}{\Phi_b(-u(\delta_1) - \frac{x_1}{2} + u)} e^{-2 \pi \textup{i} x_1 s - 4 \textup{i} c_b \beta_2 s} e^{- 4 \textup{i} c_b \gamma_2 t} e^{- 4 \textup{i} c_b \delta_2 u} \delta(s-u) \delta(-t-u) \nonumber \\ && \makebox[1em]{} = \int_{\R^2} dx_1 du e^{-2 \pi \textup{i} x_1 u - 4 \textup{i} c_b (\beta_2-\gamma_2+\delta_2) u} \nonumber \\ \nonumber && \makebox[-1em]{} \times \frac{\Phi_b(u(\beta_1) + \frac{x_1}{2} + u)\Phi_b(u(\pi-\gamma_1-\gamma_2) + \frac{x_1}{2} - u)\Phi_b(u(\delta_1) + \frac{x_1}{2} + u)}{\Phi_b(-u(\beta_1) - \frac{x_1}{2} + u) \Phi_b(-u(\pi-\gamma_1-\gamma_2) - \frac{x_1}{2} - u) \Phi_b(-u(\delta_1) - \frac{x_1}{2} + u)}
\eeqa

Changing the integration variables
$$
x_1 \rightarrow \frac{x_1}{2} + u, \ \ \ x_2 = u - \frac{x_1}{2},
$$
we get
\beqa
&& \tilde W_b(S^3,5_2) = \int_{\R^2} dx_1 dx_2 e^{-\pi \textup{i} (x_1^2-x_2^2)} e^{-2 \textup{i} c_b (\beta_2-\gamma_2+\delta_2) (x_1+x_2)} \nonumber \\ && \makebox[2em]{} \times \frac{\Phi_b(u(\beta_1) + x_1)}{\Phi_b(-u(\beta_1) + x_2)} \frac{\Phi_b(u(\pi-\gamma_1-\gamma_2) - x_2)}{\Phi_b(-u(\pi-\gamma_1-\gamma_2) - x_1)} \frac{\Phi_b(u(\delta_1) + x_1)}{\Phi_b(-u(\delta_1) + x_2)}.
\eeqa
Finally using the inversion relation (\ref{invers}) and shifting integration variables
$$
x_1 \rightarrow x_1 - u(\delta_1), \ \ \ x_2 \rightarrow x_2 + u(\delta_1),
$$\beqa \label{5_2}
&& \tilde W_b(S^3,5_2) \nonumber \\ \nonumber && \makebox[0em]{} = \int_\R dx_2 \frac{e^{\pi \textup{i} x_2^2} e^{- 2 c_b \textup{i}(\pi- \beta_1 - \beta_2 - \delta_2 + \gamma_2) x_2}}{\Phi_b(x_2) \Phi_b(u(\delta_1)-u(\beta_1) +x_2) \Phi_b(u(\delta_1) - u(\pi-\gamma_1-\gamma_2) + x_2)} \nonumber \\ \nonumber && \makebox[1em]{} \times \int_\R dx_1 e^{-\pi \textup{i} x_1^2} e^{-2 c_b \textup{i}(\pi - \beta_1 - \beta_2 - \delta_2 + \gamma_2) x_2} \\ \nonumber && \makebox[1em]{} \times \Phi_b(x_1) \Phi_b(u(\beta_1)-u(\delta_1) + x_1) \Phi_b(u(\pi-\gamma_1-\gamma_2)-u(\delta_1) + x_1) \nonumber \\ && \makebox[2.5em]{} = \left| \int_\R dx \frac{e^{\pi \textup{i} x^2} e^{- 2 c_b \textup{i}(\pi- \beta_1 - \beta_2 - \delta_2 + \gamma_2) x}}{\Phi_b(x) \Phi_b(\frac{\textup{i}}{2} (\beta_1-\delta_1) +x) \Phi_b(c_b-\frac{\textup{i}}{2} (\gamma_1+\delta_1+\gamma_2) + x)} \right|^2.
\eeqa
In the complete balancing case $
\delta_2 = \beta_3 + \gamma_2, \delta_1 = \gamma_3 = \beta_1,
$
one gets
\beq \nonumber
\left| \int_{\R-\textup{i}0} dy \frac{e^{\pi \textup{i} y^2}}{\Phi_b(y)^3} \right|^2.
\eeq

\subsection{One vertex H-triangulation of $(S^3,6_1)$}
First of all we start from describing of an H-triangulation of $(S^3,6_1)$.
In the graphical notation of \cite{Andersen:2011bt}, let $X$ be given by the diagram
\begin{figure}[ht!]
\centering
\includegraphics[scale=1]{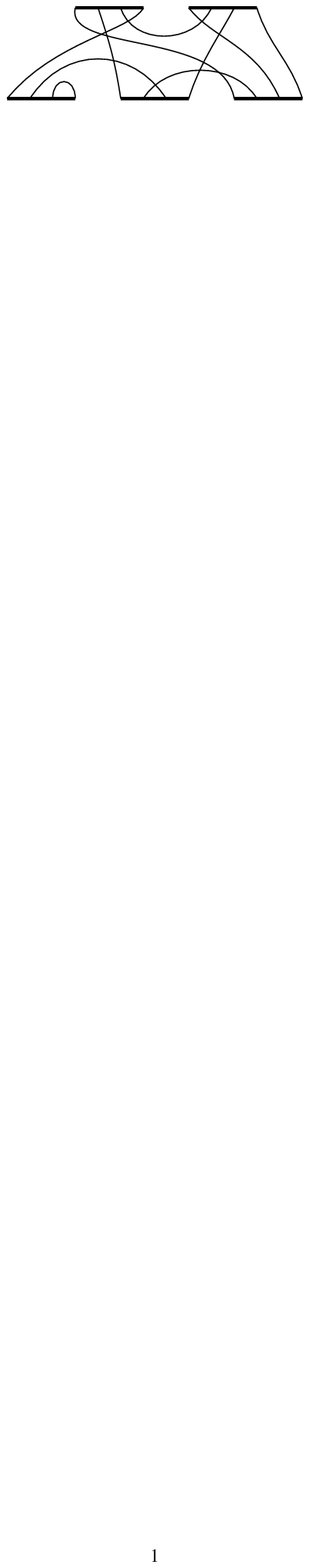}
\end{figure}

This one vertex H-triangulation of $(S^3,6_1)$ consists of 5 tetrahedra: $T_1$ and $T_3$ which are positive tetrahedra and $T_2, T_4, T_5-$ negative tetrahedra. In the above picture,  tetrahedra $T_1, T_3$ and $T_5$ are situated on the bottom row in the same order from left to right and $T_2, T_4$ lie on the upper row from left to right. According to the picture one has to identify the following faces of five tetrahedra:
\beqa
&& \partial_0 T_1 \simeq \partial_0 T_2, \ \partial_1 T_1 \simeq \partial_2 T_3, \ \partial_2 T_1 \simeq \partial_3 T_1, \ \partial_1 T_2 \simeq \partial_2 T_4, \ \partial_3 T_2 \simeq \partial_0 T_5, \nonumber \\ && \partial_2 T_2 \simeq \partial_0 T_3, \ \partial_1 T_3 \simeq \partial_1 T_5, \ \partial_3 T_3 \simeq \partial_1 T_4, \ \partial_0 T_4 \simeq \partial_3 T_5, \ \partial_3 T_4 \simeq \partial_2 T_5.
\eeqa
From the identification of faces we get the following equalities for the edges
\beqa
&& x_{23}^{(1)} = x_{23}^{(2)} = x_{13}^{(4)} = x_{02}^{(5)} = x_{02}^{(3)} = x_{03}^{(4)} = x_{03}^{(2)} = x_{13}^{(3)}; \nonumber \\ && x_{13}^{(1)} = x_{13}^{(2)} = x_{23}^{(3)} = x_{23}^{(5)} = x_{12}^{(2)} = x_{12}^{(1)}; \nonumber \\ && x_{03}^{(1)} = x_{03}^{(3)} = x_{02}^{(1)} = x_{01}^{(3)} = x_{02}^{(4)} = x_{03}^{(5)}; \nonumber \\ && x_{02}^{(2)} = x_{01}^{(4)} = x_{01}^{(5)} = x_{12}^{(4)} = x_{13}^{(5)}; \nonumber \\ && x_{01}^{(2)} = x_{12}^{(3)} = x_{23}^{(4)} = x_{12}^{(5)},
\eeqa
and just $x_{01}^{(1)}$. Here, the superscripts $(1)$, $(2)$, $(3)$, $(4)$,  and $(5)$ refer to tetrahedra $T_i, i=1,\ldots,5$.

The partition function for this one vertex H-triangulation of $(S^3,6_1)$ is equal to the integral of five Boltzmann weights of five tetrahedra
\beqa
&& W_b(S^3,6_1) = \int_{\R^6} \mathcal{B}(\Delta \alpha_1,\Delta \alpha_2 + \textup{i} \nabla (x_{01}^{(1)} + x_{23}^{(1)} - x_{03}^{(1)} - x_{13}^{(1)})) \delta(x_{23}^{(1)}/2) \nonumber \\ && \makebox[0em]{} \times \mathcal{B}(\Delta \beta_1 +\textup{i} \nabla(x_{23}^{(1)} - x_{02}^{(2)}), \Delta \beta_2 + \textup{i} \nabla (x_{01}^{(2)} - x_{13}^{(1)})) \nonumber \\ \nonumber && \makebox[0em]{} \times \mathcal{B}(\Delta \gamma_1 - \textup{i} \nabla (x_{03}^{(1)} + x_{01}^{(2)} - 2 x_{23}^{(1)}), \Delta \gamma_2 - \textup{i} \nabla (x_{13}^{(1)} - x_{01}^{(2)})) \\ && \makebox[0em]{} \times \mathcal{B}(\Delta \delta_1 + \textup{i} \nabla (x_{03}^{(1)} + x_{01}^{(2)} - x_{23}^{(1)} - x_{02}^{(2)}), \Delta \delta_2 + \textup{i} \nabla (x_{02}^{(2)} + x_{13}^{(1)} - x_{03}^{(1)} - x_{01}^{(2)})) \nonumber \\ \nonumber && \makebox[0em]{} \times \mathcal{B}(\Delta \rho_1 + \textup{i} \nabla  (x_{02}^{(2)} - x_{03}^{(1)}), \Delta \rho_2 + \textup{i} \nabla (x_{01}^{(2)} - x_{23}^{(1)})) dx_{23}^{(1)} dx_{02}^{(2)} dx_{13}^{(1)} dx_{03}^{(1)} dx_{01}^{(2)} \nonumber \\ && \makebox[4.5em]{} = 2 \tilde W_b(S^3,6_1)  \int_\R \mathcal{B}(\Delta \alpha_1,\Delta \alpha_2 + \textup{i} \nabla t) dt \nonumber \\ && \makebox[4.5em]{} = 2 \tilde W_b(S^3,6_1) \left| \Phi_b(u(\alpha_1)) \right|^2,
\eeqa
where
we fixed the edge $x_{23}^{(1)}$ and
\beqa
&& \tilde W_b(S^3,6_1) = \int_{\R^4} \mathcal{B}(\Delta \beta_1 - \textup{i} \nabla x_{02}^{(2)}, \Delta \beta_2 + \textup{i} \nabla (x_{01}^{(2)} - x_{13}^{(1)})) \nonumber \\ && \makebox[1em]{} \times \mathcal{B}(\Delta \gamma_1 - \textup{i} \nabla (x_{03}^{(1)} + x_{01}^{(2)}), \Delta \gamma_2 - \textup{i} \nabla (x_{13}^{(1)} - x_{01}^{(2)})) \nonumber \\ && \makebox[1em]{} \times \mathcal{B}(\Delta \delta_1 + \textup{i} \nabla (x_{03}^{(1)} + x_{01}^{(2)} - x_{02}^{(2)}), \Delta \delta_2 + \textup{i} \nabla (x_{02}^{(2)} + x_{13}^{(1)} - x_{03}^{(1)} - x_{01}^{(2)})) \nonumber \\ && \makebox[1em]{} \times \mathcal{B}(\Delta \rho_1 + \textup{i} \nabla (x_{02}^{(2)} - x_{03}^{(1)}), \Delta \rho_2 + \textup{i} \nabla x_{01}^{(2)}) dx_{02}^{(2)} dx_{13}^{(1)} dx_{03}^{(1)} dx_{01}^{(2)},
\eeqa
now we can change variables $x_{02}^{(2)} \rightarrow x_{02}^{(2)}+x_{03}^{(1)}+x_{01}^{(2)}, x_{03}^{(1)} \rightarrow x_{03}^{(1)} - x_{01}^{(2)}$ and $x_{13}^{(1)} \rightarrow x_{13}^{(1)} + x_{01}^{(2)}$ and one gets
\beqa
&& \tilde W_b(S^3,6_1) = \int_{\R^4} \mathcal{B}(\Delta \beta_1 - \textup{i} \nabla (x_{02}^{(2)} + x_{03}^{(1)}), \Delta \beta_2 - \textup{i} \nabla x_{13}^{(1)}) \\ \nonumber && \makebox[-1.6em]{} \times  \mathcal{B}(\Delta \gamma_1 - \textup{i} \nabla x_{03}^{(1)}, \Delta \gamma_2 - \textup{i} \nabla x_{13}^{(1)}) \mathcal{B}(\Delta \delta_1 - \textup{i} \nabla x_{02}^{(2)}, \Delta \delta_3 - \textup{i} \nabla (x_{01}^{(2)} + x_{13}^{(1)})) \\ \nonumber && \makebox[-1.6em]{} \times  \mathcal{B}(\Delta \rho_1 + \textup{i} \nabla (x_{02}^{(2)} + x_{01}^{(2)}), \Delta \rho_2 + \textup{i} \nabla x_{01}^{(2)}) dx_{02}^{(2)} dx_{13}^{(1)} dx_{03}^{(1)} dx_{01}^{(2)},
\eeqa
where $\delta_3 = \Delta (\pi - \delta_1 - \delta_2)$.
Using now relation (\ref{eq:BPsi}) we rewrite the latter expression as
\beqa
&& \tilde W_b(S^3,6_1) = \int_{\R^4} \psi \left( u(\beta_2) - \frac{x_{13}^{(1)}}{2}, -x_{02}^{(2)} - x_{03}^{(1)} - 2c_b \frac{\beta_1}{\pi} \right) \\ \nonumber && \makebox[-1.5em]{} \times  \psi \left(u(\gamma_2) - \frac{x_{13}^{(1)}}{2}, - x_{03}^{(1)} - 2c_b \frac{\gamma_1}{\pi} \right)  \psi \left( u(\rho_2) + \frac{x_{01}^{(2)}}{2}, x_{02}^{(2)} + x_{01}^{(2)} - 2c_b \frac{\rho_1}{\pi} \right) \\ \nonumber && \makebox[-1.5em]{} \times  \psi \left( u(\delta_3) - \frac{x_{01}^{(2)} + x_{13}^{(1)}}{2}, - x_{02}^{(2)} - 2c_b \frac{\delta_1}{\pi} \right)dx_{02}^{(2)} dx_{13}^{(1)} dx_{03}^{(1)} dx_{01}^{(2)},
\eeqa
and using the definition for $\psi$ function we get
\beqa
&& \tilde W_b (S^3,6_1) = \int_{\R^8} \frac{\Phi_b(u(\beta_2) - \frac{x_{13}^{(1)}}{2}+u)}{\Phi_b(-u(\beta_2) + \frac{x_{13}^{(1)}}{2}+u)} \frac{\Phi_b(u(\gamma_2) - \frac{x_{13}^{(1)}}{2}+v)}{\Phi_b(-u(\gamma_2) + \frac{x_{13}^{(1)}}{2}+v)} \nonumber \\ && \makebox[1em]{} \times \frac{\Phi_b(u(\rho_2) + \frac{x_{01}^{(2)}}{2}+s)}{\Phi_b(-u(\rho_2) - \frac{x_{01}^{(2)}}{2}+s)} \frac{\Phi_b(u(\delta_3) - \frac{x_{01}^{(2)} + x_{13}^{(1)}}{2}+t)}{\Phi_b(-u(\delta_3) + \frac{x_{01}^{(2)} + x_{13}^{(1)}}{2}+t)} \nonumber \\ && \makebox[-1em]{} \times e^{-2 \pi \textup{i} (x_{02}^{(2)} + x_{03}^{(1)})u - 4 \textup{i} c_b \beta_1 u -2 \pi \textup{i} x_{03}^{(1)} v - 4 \textup{i} c_b \gamma_1 v + 2 \pi \textup{i} ( x_{02}^{(2)} + x_{01}^{(2)}) s - 4 \textup{i} c_b \rho_1 s -2 \pi \textup{i} x_{02}^{(2)} t - 4 \textup{i} c_b \delta_1 t} \nonumber \\ && \makebox[1em]{} \times dx_{13}^{(1)} dx_{01}^{(2)} dx_{02}^{(2)} dx_{03}^{(1)} du dv ds dt,
\eeqa
where we can take the integral over $x_{02}^{(2)}$ and $x_{03}^{(1)}$ which produce two delta functions $\delta(-u+s-t)$ and $\delta(-v-u)$ respectively
\beqa
&& \tilde W_b (S^3,6_1) = \int_{\R^6} \frac{\Phi_b(u(\beta_2) - \frac{x_{13}^{(1)}}{2}+u)}{\Phi_b(-u(\beta_2) + \frac{x_{13}^{(1)}}{2}+u)} \frac{\Phi_b(u(\gamma_2) - \frac{x_{13}^{(1)}}{2}+v)}{\Phi_b(-u(\gamma_2) + \frac{x_{13}^{(1)}}{2}+v)} \nonumber \\ && \makebox[1em]{} \times \frac{\Phi_b(u(\rho_2) + \frac{x_{01}^{(2)}}{2}+s)}{\Phi_b(-u(\rho_2) - \frac{x_{01}^{(2)}}{2}+s)} \frac{\Phi_b(u(\delta_3) - \frac{x_{01}^{(2)} + x_{13}^{(1)}}{2}+t)}{\Phi_b(-u(\delta_3) + \frac{x_{01}^{(2)} + x_{13}^{(1)}}{2}+t)} \nonumber \\ && \makebox[1em]{} \times e^{- 4 \textup{i} c_b \beta_1 u}  e^{- 4 \textup{i} c_b \gamma_1 v}  e^{2 \pi \textup{i} x_{01}^{(2)} s - 4 \textup{i} c_b \rho_1 s}  e^{- 4 \textup{i} c_b \delta_1 t} \delta(-u+s-t) \delta(-v-u) \nonumber \\ && \makebox[1em]{} \times dx_{13}^{(1)} dx_{01}^{(2)} du dv ds dt,
\eeqa
taking the integrals over $t$ and $v$ one gets
\beqa
&& \tilde W_b (S^3,6_1) = \int_{\R^4} \frac{\Phi_b(u(\beta_2) - \frac{x_{13}^{(1)}}{2}+u)}{\Phi_b(-u(\beta_2) + \frac{x_{13}^{(1)}}{2}+u)} \frac{\Phi_b(u(\gamma_2) - \frac{x_{13}^{(1)}}{2}-u)}{\Phi_b(-u(\gamma_2) + \frac{x_{13}^{(1)}}{2}-u)} \nonumber \\ && \makebox[1em]{} \times \frac{\Phi_b(u(\rho_2) + \frac{x_{01}^{(2)}}{2}+s)}{\Phi_b(-u(\rho_2) - \frac{x_{01}^{(2)}}{2}+s)} \frac{\Phi_b(u(\delta_3) - \frac{x_{01}^{(2)} + x_{13}^{(1)}}{2}+s-u)}{\Phi_b(-u(\delta_3) + \frac{x_{01}^{(2)} + x_{13}^{(1)}}{2}+s-u)} \nonumber \\ && \makebox[1em]{} \times e^{- 4 \textup{i} c_b (\beta_1-\gamma_1-\delta_1) u} e^{2 \pi \textup{i} x_{01}^{(2)} s - 4 \textup{i} c_b (\rho_1+\delta_1) s} dx_{13}^{(1)} dx_{01}^{(2)} du ds.
\eeqa

Let us consider reparametrization $x=u-\frac12 x_{13}^{(1)}, y=u+\frac12 x_{13}^{(1)}, z=s+\frac12 x_{01}^{(2)}, w=s-\frac12 x_{01}^{(2)}$
\beqa
&& \tilde W_b (S^3,6_1) = \int_{\R^4} \frac{\Phi_b(u(\beta_2) +x)}{\Phi_b(-u(\beta_2) +y)} \frac{\Phi_b(u(\gamma_2) - y)}{\Phi_b(-u(\gamma_2) - x)} \nonumber \\ && \makebox[1em]{} \times \frac{\Phi_b(u(\rho_2) + z)}{\Phi_b(-u(\rho_2) +w)} \frac{\Phi_b(u(\delta_3) +w-y)}{\Phi_b(-u(\delta_3) + z-x)} \nonumber \\ && \makebox[1em]{} \times e^{-2 \textup{i} c_b (\beta_1-\gamma_1-\delta_1) (x+y)} e^{\pi \textup{i} (z^2-w^2)} e^{- 2 \textup{i} c_b (\rho_1+\delta_1) (z+w)} dx_{13}^{(1)} dx_{01}^{(2)} du ds.
\eeqa
which is equal to
\beqa \label{Knot6}
&& \makebox[2em]{} \tilde W_b(S^3,6_1) \\ \nonumber && \makebox[-2em]{} = \left| \int_{\R^2} \frac{\Phi_b(u(\beta_2)+x)\Phi_b(u(\rho_2)+z)}{\Phi_b(-u(\gamma_2)-x)\Phi_b(-u(\delta_3)+z-x)} e^{-2 \textup{i} c_b \left( (\beta_1-\gamma_1-\delta_1) x + (\rho_1 + \delta_1) z \right) + \pi \textup{i} z^2} dx dz \right|^2,
\eeqa
demonstrating the factorization for the H-triangulation of $(S^3,6_1)$ and which is consistent with Conjecture~\ref{conj}.

In the complete balancing case one has
$$
\beta_2 = \gamma_2, \ \beta_2 = \rho_2 + \delta_3, \ \beta_1 = \rho_1 + \delta_1, \ \rho_2 = \rho_1 + \delta_1 - 1,
$$
simplifying (\ref{Knot6}) to the following expression after shifting $x \rightarrow x-u(\beta_2), z \rightarrow z-u(\rho_2)$
\beqa
&& \makebox[2em]{} \tilde W_b(S^3,6_1) = \left| \int_{\R^2} \frac{\Phi_b(x)\Phi_b(z)}{\Phi_b(-x)\Phi_b(-c_b+z-x)} e^{-4 \pi \textup{i} c_b z + \pi \textup{i} z^2} dx dz \right|^2,
\eeqa

\section{Application to $3d$ supersymmetric field theories}\label{3dSUSY}
\subsection{$3d$ supersymmetric theories living on a squashed three-sphere}
Following the work of Pestun~\cite{Pestun}, the partition functions of $3d$ $\mathcal{N}=2$ supersymmetric theories, defined on a squashed three-sphere $S_b^3$, were calculated in the papers \cite{KWY,Jafferis,Hama} by using the localization method. These partition functions are given in the form of integrals with the integrands composed of hyperbolic gamma functions \cite{Willett,DSV}. For any $3d$ $\mathcal{N}=2$ supersymmetric theory defined on $S_b^3$ with a gauge group $G$ and a flavour group $F$, the corresponding partition function has the following structure
\beq \label{PF_def}
Z(\underline{f}) \ = \ \int_{-\textup{i}\infty}^{\textup{i}\infty}
 \prod_{j=1}^{\textup{rank} G}du_j\,  J(\underline{u})
Z^{vec}(\underline{u}) \prod_{I} Z_{\Phi_I}^{chir}(\underline{f},\underline{u}).
\eeq
Here the integral is taken over $u_j$-variables which are associated with the Weyl weights for the Cartan
subalgebra of the gauge group $G$ and the $f_k$'s denote the chemical potentials for the flavor symmetry group $F$\footnote{From physical point of view, $f_k$'s are linear combinations of the $R$-charge, the masses of the hypermultiplets, and the Fayet--Illiopoulos terms associated to the additional Abelian global symmetries.}. For CS theory one has
$J(\underline{u})=e^{-\pi \textup{i} k \sum_{j=1}^{\textup{rank} G} u_j^2}$,
where $k$ is the level of the CS-term, while for SYM theories one has
$J(\underline{u}) = e^{2 \pi \textup{i} \lambda \sum_{j=1}^{\textup{rank} G} u_j}$,
where $\lambda$ is the Fayet--Illiopoulos term. The terms
$Z^{vec}(\underline{u})$ and $Z_{\Phi_I}^{chir}(\underline{f},\underline{u})$
 in (\ref{PF_def}) come from the vector superfield and the matter fields,
repectively, and are given in terms of the hyperbolic gamma function.

The result of localization allows us to relate the physical theory with some matrix integral of the form (\ref{PF_def}). Also we can invert the logic: having some matrix integral of the type (\ref{PF_def}) one can find a $3d$ $\mathcal{N}=2$ supersymmetric field theory whose partition function is given by this matrix integral \cite{DSV}. Thus, all the partition functions which we get by considering (\ref{eq:pf}) can be interpreted as partition functions for some $3d$ $\mathcal{N}=2$ supersymmetric field theories. Moreover, as the expression (\ref{eq:pf}) corresponds to some triangulation of a $3$-dimensional manifold $M$,  we obtain a link between $3$- manifolds and $3d$ $\mathcal{N}=2$ supersymmetric field theories defined on $S_b^3$. This is known as a $3d/3d$ duality considered recently in \cite{Terashima:2011qi,Dimofte:2011ju,Dimofte:2011py,Teschner:2012em} (see also \cite{SV} for the relation of the objects to four-dimensional supersymmetric field theories).

In \cite{Dimofte:2011ju}, the state variables live in the faces, while in our case   the state variables live on the edges. To get a $3d$ theory from $3d$ manifold $M$ one has to triangulate this manifold and calculate its partition function (\ref{eq:pf}) and then interpret this expression as a partition function (\ref{PF_def}). One should notice that every common edge corresponds to abelian gauge group.


Let us start from our building block: the tetrahedral Boltzmann weight composed of three hyperbolic gamma functions, each corresponding to the contribution coming from the $3d$ $\mathcal{N}=2$ chiral hypermultiplet. Namely,
\beq
\mathcal{B}(T,x) = \prod_{i=1}^3 \gamma^{(2)}(\Delta \alpha_i + \textup{i} \nabla (x_{i+1}+x_{i+1}'-x_{i-1}-x_{i-1}');\omega_1,\omega_2),
\eeq
corresponds to three chiral superfields $Q_i,i=1,2,3$, with $SU(3)$ global symmetry group (since $\sum_{i=1}^3 \textup{i}(x_{i+1}+x_{i+1}'-x_{i-1}-x_{i-1}') = 0$) and a superpotential
$$W \sim Q_1 Q_2 Q_3,$$
which has a correct $R$-charge. This can be easily seen from the fact that the dihedral angles $\alpha_i, i=1,2,3$ correspond to $R$-charges of three chiral superfields. And since $\sum_{i=1}^3 \alpha_i = \pi$ then the $R_W$charge of the superpotential $W$ is given as $R_W = \sum_{i=1}^3 R_{Q_i} = \sum_{i=1}^3 2 \alpha_i/\pi  = 2$.


The first non-trivial case of a $3d$ theory with a non-trivial gauge group is the pentagon identity (\ref{Pent1}) (once again we stress that it is a $2-3$ Pachner move) when we take two positive tetrahedra and glue them together over the common face. The partition function of two glued tetrahedra having vertices $(0,1,2,4)$ and $(0,2,3,4)$ 
is
\beqa \label{PentL}
&& \makebox[-0.3em]{} W_{b,A} = \mathcal{B}(\Delta \alpha_1 + \textup{i} \nabla (x_{02}+x_{34}-x_{03}-x_{24}), \Delta \alpha_2 + \textup{i} \nabla (x_{03}+x_{24}-x_{04}-x_{23})) \nonumber \\ && \makebox[1.3em]{} \times \mathcal{B}(\Delta \beta_1 + \textup{i} \nabla (x_{01}+x_{24}-x_{02}-x_{14}), \Delta \beta_2 + \textup{i} \nabla (x_{02}+x_{14}-x_{04}-x_{12})),
\eeqa
where $\sum_{i=1}^3 (a_i+b_i)=\omega_1+\omega_2$ which is the partition function for a theory $A$ which consists of six $3d$ $\mathcal{N}=2$ free chiral hypermultiplets with $F=SU(3) \times SU(3) \times U(1)$ global symmetry group. Here we have $\alpha_1 = a_2+b_1, \alpha_2=a_3+b_2, \beta_1 = a_1+b_2$ and $\beta_2 = a_3 + b_1$. Here each $SU(3)$ corresponds to separate tetrahedron and $U(1)$ group distinguishes the two tetrahedra. At the same time, using $2-3$ Pachner move, two glued tetrahedra can be considered as three tetrahedra with the vertices $(0,1,2,3), (0,1,3,4)$ and $(1,2,3,4)$ having a common edge $x_{04}$ whose partition function is
\beqa \label{PentR}
&& \makebox[-1.5em]{} W_{b,B} = \int_\R \mathcal{B}(\Delta a_1 + \textup{i} \nabla (x_{01}+x_{23}-x_{02}-x_{13}),\Delta b_1 + \textup{i} \nabla (x_{02}+x_{13}-x_{03}-x_{12})) \nonumber \\ && \makebox[1em]{} \times \mathcal{B}(\Delta a_2 + \textup{i} \nabla (x_{12}+x_{34}-x_{13}-x_{24}),\Delta b_2 + \textup{i} \nabla (x_{13}+x_{24}-x_{14}-x_{23})) \\ \nonumber && \times \mathcal{B}(\Delta a_3 + \textup{i} \nabla (x_{01}+x_{34}-x_{03}-x_{14}),\Delta b_3 + \textup{i} \nabla (x_{03}+x_{14}-x_{04}-x_{13})) dx_{13},
\eeqa
where $\sum_{i=1}^3 (a_i+b_i) = \omega_1+\omega_2$.

Expression (\ref{PentR}) gives a partition function for $3d$ $\mathcal{N}=2$ SQED theory $B$ (which has $U(1)$ gauge group) with $3$ flavors and overall $F=SU(3) \times SU(3) \times U(1)$ global symmetry group and $2$ singlet baryons. There are three tetrahedra in this picture so one can think of $SU(3)^3$ global symmetry group but the part of this, namely, $U(1)$ becomes a gauge group leaving $SU(3) \times SU(3) \times U(1)$ global symmetry group.

Since (\ref{PentL})=(\ref{PentR}) the partition functions for theories $A$ and $B$ are the same which suggests the duality between these theories. Generally, different triangulations of $3$-manifolds produce different phases of the same theory, in other words, we get dual descriptions for $3d$ supersymemtric field theories related to a given $3$-dimensional manifold.


One can continue further and construct triangulations for other $3$-manifolds and relate them to $3d$ supersymmetric field theories. As a next example, we consider four tetrahedra built from vertices $(0,1,2,5), (0,2,3,5), (0,3,4,5), (0,1,4,5)$ glued together over a common edge $x_{05}$ to form an  octahedron. We have four tetrahedra: three positive $T_1, T_2, T_3$ and one negative $T_4$. These tetrahedra have the following vertices: $T_1 = \{0,1,2,5\}, T_2 = \{0,2,3,5\}, T_3 = \{0,3,4,5\}, T_4 = \{0,1,4,5\}$. 
Identifying the faces, we get
\beq
\partial_1 T_1 \simeq \partial_3 T_2, \ \ \ \ \partial_2 T_2 \simeq \partial_4 T_3, \ \ \ \ \partial_3 T_3 \simeq \partial_1 T_4, \ \ \ \ \partial_2 T_1 \simeq \partial_4 T_4, \ \ \ \
\eeq
from which we get
\beqa
&& x_{05}^{(1)} = x_{05}^{(2)} = x_{05}^{(3)} = x_{05}^{(4)}, \ \ x_{25}^{(1)} = x_{25}^{(2)}, \ \  x_{02}^{(1)} = x_{02}^{(2)}, \ \ x_{35}^{(2)} = x_{35}^{(3)}, \nonumber \\ && x_{03}^{(2)} = x_{03}^{(3)}, \ \ x_{45}^{(3)} = x_{45}^{(4)}, \ \ x_{04}^{(3)} = x_{04}^{(4)}, \ \ x_{15}^{(1)} = x_{15}^{(4)}, \ \ x_{01}^{(1)} = x_{01}^{(4)}.
\eeqa
so that the partition function is equal to
\beqa
&& \makebox[1em]{} W_{b,\text{Octahedron}} = \int d x_{05}^{(1)} \nonumber \\ && \makebox[-0.3em]{} \times \mathcal{B}(\Delta \alpha_1 + \textup{i} \nabla (x_{02}^{(1)}+x_{15}^{(1)}-x_{12}^{(1)}-x_{05}^{(1)}), \Delta \alpha_2 + \textup{i} \nabla (x_{12}^{(1)}-x_{01}^{(1)}-x_{25}^{(1)}+x_{05}^{(1)})) \nonumber \\ && \makebox[-0.3em]{} \times  \mathcal{B}(\Delta \beta_1+\textup{i} \nabla (x_{03}^{(2)}+x_{25}^{(1)}-x_{23}^{(2)}-x_{05}^{(1)}), \Delta \beta_2+\textup{i} \nabla (x_{23}^{(2)}-x_{02}^{(1)}-x_{35}^{(2)}+x_{05}^{(1)})) \nonumber \\ && \makebox[-0.3em]{} \times \mathcal{B}(\Delta \gamma_1+\textup{i} \nabla (x_{04}^{(3)}+x_{35}^{(2)}-x_{34}^{(3)}-x_{05}^{(1)}), \Delta \gamma_2+\textup{i} \nabla (x_{34}^{(3)}-x_{03}^{(2)}-x_{45}^{(3)}+x_{05}^{(1)})) \nonumber \\ && \makebox[-0.3em]{} \times \mathcal{B}(\Delta \delta_1 - \textup{i} \nabla (x_{14}^{(4)}-x_{01}^{(1)}-x_{45}^{(3)}+x_{05}^{(1)}), \Delta \delta_2-\textup{i} \nabla (x_{04}^{(3)}+x_{15}^{(1)}-x_{14}^{(4)}-x_{05}^{(1)})),
\eeqa
which corresponds to the partition function of $3d$ $\mathcal{N}=2$ SQED theory with $4$ flavors and four singlet baryons with the overall global symmetry $SU(3)^3 \times U(1)$. Octahedron can be also represented by gluing five tetrahedra which is not so obvious from geometrical point of view and will be much easier to see from the next subsection using the Bailey tree technique. As we will show the triangulation with five tetrahedra gives a dual description of the starting theory in terms of a quiver gauge theory with $U(1) \times U(1)$ gauge group.

Continuing further and gluing more tetrahera one gets a class of $3d$ supersymmetric field theories corresponding to a given triangulation of a $3$-manifold. For example, gluing $F$ tetrahedra along one common edge one gets the partition function for $3d$ $\mathcal{N}=2$ SQED theory with $F$ flavors and $F$ additional singlet baryons which has $SU(3)^{F-1} \times U(1)$ global symmetry group (since $U(1)$ becomes a gauge group).

\subsection{Bailey tree technique}
There is an alternative way to see the results of the previous subsection based on the application of Bailey tree technique for hyperbolic integrals (very much in the spirit of \cite{Spir:2003}). This approach  gives an algebraic way of getting the partition functions and relates  the triangulated $3$-dimensional manifolds from one hand side, and $3d$ supersymmetric field theories defined on a squashed three sphere, from the other. The Bailey tree technique is useful for tracking different triangulations related to each other by $2-3$ Pachner move from the algebraic viewpoint.

\begin{definition}We say that two functions $\alpha(z,t)$ and $\beta(z,t), z,t \in \mathbb{C}$ form an integral hyperbolic Bailey pair (the hyperbolic level) with respect to the parameter $t$ if
\beq \label{BaileyP}
\beta(w,t) = \int \mathcal{B}(t+w-z,t-w+z) \alpha(z,t) dz.
\eeq
\end{definition}

\begin{theorem} [follows from Theorem 1 \cite{Spir:2003}]\label{thm2}
Whenever two functions $\alpha(z, t)$ and $\beta(z, t)$ form an integral hyperbolic
Bailey pair with respect to $t$, the new functions
\beq
\alpha'(w,s+t) = \mathcal{B}(t+u+w,2s) \alpha(w,t)
\eeq
and
\beq
\beta'(w,s+t) = \int \mathcal{B}(s+w-x,u+x) \mathcal{B}(s+2t+u+w,s-w+x) \beta(x,t) dx,
\eeq
form an integral hyperbolic Bailey pair with respect to parameter $s+t$.
\end{theorem}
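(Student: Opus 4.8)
The plan is to verify directly the defining relation \eqref{BaileyP} for the candidate pair $(\alpha',\beta')$ at level $s+t$, that is, to show
\[
\beta'(w,s+t)=\int \mathcal{B}\big((s+t)+w-z,\,(s+t)-w+z\big)\,\alpha'(z,s+t)\,dz,
\]
and to exhibit how, once the definition of $\beta'$ and the hypothesis \eqref{BaileyP} for $(\alpha,\beta)$ are inserted, the whole statement collapses to a single application of the hyperbolic pentagon relation \eqref{Pent1}. First I would substitute into the definition of $\beta'(w,s+t)$ the Bailey hypothesis $\beta(x,t)=\int \mathcal{B}(t+x-z,t-x+z)\alpha(z,t)\,dz$, producing a double integral over $x$ and $z$. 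Assuming absolute convergence, I would apply Fubini to exchange the order of integration and perform the $x$-integration first, so that the quantity to evaluate is the inner integral
\[
\int \mathcal{B}(s+w-x,\,u+x)\,\mathcal{B}(s+2t+u+w,\,s-w+x)\,\mathcal{B}(t+x-z,\,t-x+z)\,dx .
\]

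The heart of the argument is to recognize this inner integral as an instance of \eqref{Pent1}. Using the full threefold symmetry of $\mathcal{B}$ — which follows from the inversion relation \eqref{Inv}, since $\mathcal{B}(x,y)=\gamma^{(2)}(x;\omega_1,\omega_2)\gamma^{(2)}(y;\omega_1,\omega_2)\gamma^{(2)}(\omega_1+\omega_2-x-y;\omega_1,\omega_2)$ is symmetric in $x$, $y$ and $\omega_1+\omega_2-x-y$ — I would rewrite each factor in the canonical pentagon form $\mathcal{B}(a_i-x,b_i+x)$ with the dictionary
\[
a_1=s+w,\ b_1=u;\qquad a_2=\omega_1+\omega_2-2s-2t-u,\ b_2=s-w;\qquad a_3=t+z,\ b_3=t-z .
\]
A short check gives $\sum_{i=1}^3(a_i+b_i)=\omega_1+\omega_2$, so the balancing condition of \eqref{Pent1} is met and the pentagon identity yields $\mathcal{B}(a_2+b_1,a_3+b_2)\,\mathcal{B}(a_1+b_2,a_3+b_1)$. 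Evaluating the arguments gives $a_2+b_1=\omega_1+\omega_2-2s-2t$, $a_3+b_2=s+t+z-w$, $a_1+b_2=2s$, $a_3+b_1=t+u+z$; applying the symmetry of $\mathcal{B}$ once more to the first factor identifies it with $\mathcal{B}\big((s+t)+w-z,(s+t)-w+z\big)$, while the second factor is $\mathcal{B}(t+u+z,2s)$. Since this second factor is precisely $\alpha'(z,s+t)/\alpha(z,t)$, reinserting the result into the remaining $z$-integral reproduces exactly the desired defining relation for $(\alpha',\beta')$, establishing the theorem.

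The main obstacle is not the algebraic matching, which is essentially forced once the factors are written symmetrically, but the analytic bookkeeping. One must check that the chosen ranges of $s$, $t$, $u$, $w$ and $z$ keep all arguments of the $\mathcal{B}$-functions in the convergence domain where both the original Bailey integral and \eqref{Pent1} are valid (so that the contours separate the two families of poles correctly), verify that the measure normalization $du/(\textup{i}\sqrt{\omega_1\omega_2})$ of \eqref{Pent1} is the one carried by the $x$-integral in the definition of $\beta'$, and justify the interchange of integrations by an absolute-convergence estimate. The result is then extended to the full parameter range by analytic continuation in $s,t,u$. This is exactly the content of Theorem~1 of \cite{Spir:2003}, transported to the hyperbolic level through the degeneration \eqref{lim}.
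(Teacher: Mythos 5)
Your proposal is correct and takes essentially the same route as the paper's own proof: substitute the Bailey relation for $\beta(x,t)$ into the definition of $\beta'(w,s+t)$, exchange the order of integration, and evaluate the inner $x$-integral by the pentagon identity, which the paper invokes in its $\gamma^{(2)}$-form \eqref{EllBInt} while you use the equivalent $\mathcal{B}$-form \eqref{Pent1}. Your explicit dictionary $a_1=s+w$, $b_1=u$, $a_2=\omega_1+\omega_2-2s-2t-u$, $b_2=s-w$, $a_3=t+z$, $b_3=t-z$, together with the threefold symmetry of $\mathcal{B}$ from \eqref{Inv}, checks out and simply makes explicit the bookkeeping the paper leaves implicit.
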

\begin{proof} The proof is similar to the proof in the elliptic case \cite{Spir:2003}. We start from the definition for $\beta'(w,s+t)$:
\beq \nonumber
\beta'(w,s+t) = \int \mathcal{B}(s+w-x,u+x) \mathcal{B}(s+2t+u+w,s-w+x) \beta(x,t) dx,
\eeq
where we substitute $\beta(x,t)$ from equation~\eqref{BaileyP}:
\beqa \label{Oktah5}
&& \beta'(w,s+t) = \int \mathcal{B}(s+w-x,u+x) \mathcal{B}(s+2t+u+w,s-w+x) \nonumber \\ && \makebox[6em]{} \times \mathcal{B}(t+x-y,t-x+y) \alpha(y,t) dy dx.
\eeqa
In the latter expression we can apply formula
\beq \label{EllBInt}
\int \prod_{i=1}^3 \gamma^{(2)}(a_i -u;\omega_1,\omega_2) \gamma^{(2)}(b_i+u;\omega_1,\omega_2) du = \prod_{i,j=1}^3 \gamma^{(2)}(a_i+b_j;\omega_1,\omega_2),
\eeq
where $\sum_{i=1}^3 (a_i+b_i) = \omega_1+\omega_2$, so that we get
\beq \label{Oktah}
\beta'(w,s+t) = \int \mathcal{B}(s+t+w-x,s+t-w+x) \alpha'(x,s+t) dx.
\eeq
\end{proof}
From identity~(\ref{EllBInt}) one gets the following Bailey pair
\beq
\alpha(z,t) = \prod_{i=1}^2 \mathcal{B}(\alpha_i-z,\beta_i+z),
\eeq
where $2t + \sum_{i=1}^2 (\alpha_i + \beta_i)=\omega_1+\omega_2$, and
\beq
\beta(w,t) = \prod_{i=1}^2 \mathcal{B}(t+w+\alpha_i,t-w+\beta_{3-i}).
\eeq
The pentagon identity permits us to define a particular Bailey pair thus giving the definition for Bailey pairs a topological interpretation in terms of the 2-3 Pachner move. In other words, the construction of new Bailey pairs through Theorem~\ref{thm2} corresponds to changing a triangulation by the 2-3 Pachner move.

In  the case of an octahedron triangulated into four tetrahedra which we considered in the previous subsection, the partition function (\ref{Oktah}) can be written as
\beq \label{4Delta}
Z_{4\Delta's} = \int \mathcal{B}(s+t+w-x,s+t-w+x) \mathcal{B}(t+u+x,2s) \prod_{i=1}^2 \mathcal{B}(\alpha_i-x,\beta_i+x) dx,
\eeq
where $2t + \sum_{i=1}^2 (\alpha_i + \beta_i)=\omega_1+\omega_2$. On the other hand, this expression is equal to (\ref{Oktah5})
\beqa
&& Z_{5\Delta's} = \int \mathcal{B}(s+w-x,u+x) \mathcal{B}(s-w+x,2t+s+u+w) \nonumber \\ && \makebox[3.5em]{} \times \mathcal{B}(t+x-y,t-x+y) \prod_{i=1}^2 \mathcal{B}(\alpha_i-y,\beta_i+y) dy dx,
\eeqa
which corresponds to triangulation of the octahedron in terms of five tetrahedra.
Repeating  this procedure, one can further increase the number of tetrahedra thus obtaining new equalities for dual $3d$ supersymmetric field theories related to these triangulations.

\section{Relationship to representation to $PSL(2,\mathbf C)$}

In this section, we briefly discuss the relationship between the  invariant $W_b(X, s,
\lambda)$ and representations of the corresponding fundamental group into $PSL(2,
\mathbf C)$ and simplicial Chern--Simons theory.

\subsection{Angle structures and representations of fundamental groups to $PSL(2, \mathbf C)$}
For simplicity, let us assume that $X=(M, \mathcal T)$ is an
oriented triangulated closed pseudo 3-manifold, i.e., $\partial X
=\emptyset$, where $M$ is the underlying pseudo 3-manifold and
$\mathcal T$ is the triangulation.  Let $\Box (\mathcal T)$ be the
set of all quads in $\mathcal T$. Recall that $\mathbf {Z}/\mathbf
{3Z} = \{1, \tau, \tau^2\}$ acts on $\Box (\mathcal T)$
corresponding to the cyclic order of three edges around each
vertex. For $q \in \Box(\mathcal T)$, we will use $q'$ and $q''$
to denote $\tau(q)$ and $\tau^2(q)$ below.   A \it shaped
structure \rm on $X$ (or $\mathcal T)$ is a function $\alpha:
\Box(\mathcal T) \to (0, \pi)$ so that
$\alpha(q)+\alpha(q')+\alpha(q'')=\pi$ for all $q \in
\Box(\mathcal T)$. The \it weight \rm of a shape structure
$\alpha$ is the function $f:\Delta_1(\mathcal T) \to \mathbf R$
sending each edge $e$ to $f(e) =\sum_{q \sim e} \alpha(q)$ where
$q \sim e$ means the quad $q$ faces the edge $e$.  In particular,
an \it angle structure \rm is a shaped structure whose weight at
each edge is $2\pi$.

The invariant  $W_b(X)$ in Theorem~\ref{main} is defined for each shaped
triangulation, i.e., $W_b(X)$ $= $$W_b(M; \mathcal T, \alpha)$.
Theorem~\ref{main} implies that $W_b(M; \mathcal T_3, \alpha) =W_b(M;
\mathcal T_2, \beta)$ if $\mathcal T_3$ is obtained from $\mathcal
T_2$ by a 2-3 Pachner move so that $\beta$ is the angle structure
on $\mathcal T_2$ induced by the angle structure $\alpha$ on
$\mathcal T_3$. (The equation for defining $\beta$ from $\alpha$
is indicated in figure 1.) In general, there are many different
(or may be none) angle structures on $\mathcal T_3$ inducing the
same angle structure on $\mathcal T_2$.  These different angles
structures are related by a gauge transformation induced by the
degree 3 edge in $\mathcal T_3$.  Theorem~\ref{main} says that $W_b(M;
\mathcal T, \alpha)$ depends only on the (edge type) gauge
equivalence class of the shaped structure $\alpha$.

We will describe briefly the edge type gauge equivalence class
now. Recall that a \it tangential angle structure \rm on $\mathcal
T$ (see \cite{Luo}) is a map $x: \Box(\mathcal T) \to \mathbf R$
so that for each $q \in \Box(\mathcal T)$, $x(q)+ x(q') +
x(q'')=0$ and for each edge $e \in  \Delta_1(\mathcal T)$,
$\sum_{q \sim e} x(q)=0$. Thus the space of all tangential angle
structures is a vector space, denoted by $TAS(\mathcal T)$. For
any shape structure $\alpha$, $v \in TAS(\mathcal T)$ and small
$t$, $\beta=\alpha + tv$ is still a shape structure so that
$\beta$ and $\alpha$ have the same weight.
 A generating set
of vectors for $TAS(\mathcal T)$ was well known and can be
described as follows.  Consider the vertex link $lk(v)$ of a
vertex $v \in \Delta_0(\mathcal T)$.
  Let $s$ be an edge loop in the dual
CW-decomposition of the triangulated surface $lk(v)$. The loop $s$
can be described as a sequence of triangles $\{t_1, ..., t_n\}$
and edges $\{\epsilon_1, ... , \epsilon_n\}$ in $lk(v)$ so that
$\epsilon_i$ is adjacent to $t_i$ and $t_{i+1}$ ($t_{n+1}=t_1$).
Since each $t_i$ corresponds to a tetrahedron $T_i$ and each
$\epsilon_i$ corresponds to a co-dimension-1 face $F_i$ in
$\mathcal T$, each $T_i$ contains a unique quad $q_i$ facing the
edge $F_{i-1} \cap F_{i}$ in $T_i$.


\begin{figure}[ht!]
\centering
\includegraphics[scale=0.55]{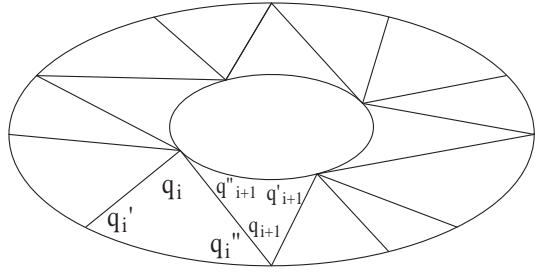}
\caption{Edge loop in a vertex link} \label{figure 2}
\end{figure}
Define a map $g_s: \Box(\mathcal T) \to \mathbf R$ by
$g_s(q_i')=1$, $g_s(q_i'')=-1$ and $g_s(q)=0$ for all other $q$'s.
One checks easily that $g_s \in TAS(\mathcal T)$. In particular,
if $s$ is the loop around a vertex $u$ in $lk(v)$, then $g_s$ is
the gauge transformation associated to the edge $e$ corresponding
to $u$. Two shaped structures $\alpha$ and $\beta$ on $\mathcal T$
are \it edge type gauge equivalent \rm if their difference $\alpha
-\beta$ is a linear combinations of $g_s$'s for edge loops $s$
which are around vertices in vertex links.  Theorem 1 says that
$W_b(M; \mathcal T, \alpha)$ depends only on the edge type gauge
equivalence classes. A theorem in \cite{Til} shows $TAS(\mathcal
T)$ is generated by vectors $g_s$.
Define the \it angle holonomy \rm $\alpha(s)$ of a shaped
structure $\alpha$ along an edge loop $s$ in $lk(v)$ to be
$\sum_{i=1}^n \alpha(q_i)$. The work of \cite{Til} and
\cite{Andersen:2011bt} show two shaped structures are edge type
gauge equivalent if and only if they have the same angle holonomy
along any edge loop $s$ in vertex links.   This suggests a way to
represent the edge type gauge equivalence class of shaped
structures using volume optimization. Namely, given a shaped
structure $\alpha$, let $A_{\alpha}$ be the set of all shaped
structures on $\mathcal T$ edge type gauge equivalent to $\alpha$.
The \it volume \rm of a shape structure is the sum of the volume
of the hyperbolic tetrahedra determined by the shape. It is well
known that volume is a strictly concave function of shape
structure $\alpha$. In particular, there is at most one shape
structure $\beta \in A_{\alpha}$ which has the maximum volume.
Note that it may not exist in $A_{\alpha}$, i.e., the maximum
volume point may appear in the boundary of the closure of
$A_{\alpha}$. Suppose now that $\alpha$ is an angle structure and
the maximum volume $\beta$ exists in $A_{\alpha}$. Then by the
standard volume optimization method (see \cite{Riv}, \cite{FG}, or
\cite{Luo}), one sees that the complex shape parameter $z_{\beta}$
given by  (\ref{shape-p}) associated to $\beta$ satisfies
Thurston's gluing equation. Therefore, it produces a
representation $\rho$ of $\pi_1(M-\Delta_0(\mathcal T))$ to
$PSL(2, \mathbf C)$ so that for any edge loop $s$ in $lk(v)$, the
eigenvalues of $\rho(s)$ are of the form $r e^{\pm \sqrt{-1}
\beta(s)/2}$ for $r \in \mathbf R_{>0}$. This shows if there
exists an angle structure of the maximum volume edge type gauge
equivalent to $\alpha$, one can assigns the invariant $W_b(M;
\mathcal T, \alpha)$ to the representation $\rho$, i.e., the
invariant $W_b(M; \mathcal T, \alpha)$ may be an invariant of a
pair $(M, \rho)$. The precise conjectural picture of $W_b(M;
\mathcal T, \alpha)$ is: if two angle structures $(\mathcal T_i,
\alpha_i)$ ($i=1,2$) are associated to the same representation
$\rho$, then $W_b(M; \mathcal T_1, \alpha_1) =W_b(M; \mathcal T_2,
\alpha_2)$.

\subsection{Relationship with Simplicial $PSL(2,\mathbf R)$ Chern--Simons
theory}

In \cite{Luo1}, we proposed a variational principle for finding
real valued solutions of Thurston's equation on a triangulated
oriented closed pseudo 3-manifold $(M; \mathcal T)$.  Given $(M;
\mathcal T)$, we introduce the homogeneous Thurston's equation
(HTE) as follows. A map $x: \Box(\mathcal T) \to \mathbf R$ is
said to solve HTE if for each $q \in \Box(\mathcal T)$, $x(q)+
x(q') + x(q'') =0$ and for each edge $e$ in $\mathcal T$,
$$  \prod_{q \sim e} x(q') = \prod_{q \sim e} (-x(q'')).$$
 It
can be proved that solutions to Thurston's equation over the real
numbers on $(M, \mathcal T)$ correspond to nowhere zero solutions
to HTE. The main observation in \cite{Luo1} is that critical
points of an entropy function of the form $\sum_{i=1} ^n x_i \ln(
|x_i|)$ are nowhere zero solutions to HTE.  The converse also
holds if $M$ is a closed 3-manifold.

Our pentagon relation (\ref{Pent2}) implies the following pentagon
relation for the entropy. Namely, given five positive numbers
$a_1, a_2, b_1, b_2, b_3$ so that $\sum_{i=1}^2a_i+\sum_{j=1}^3
b_i=1$ and $a_1a_2=b_1b_2b_3$, then \beq \label{pent3} \sum_{i, j}
(a_i+b_j) \ln(a_i+b_j) = \sum_{i=1}^2 ( a_i\ln(a_i)
+(1-a_i)\ln(1-a_i)) + \sum_{j=1}^3 b_j \ln(b_j). \eeq

Identity (\ref{pent3}) suggests there should exist a non-quantum
topological invariant for 3-manifold from simplical $SL(2,\mathbf
R )$ Chern-Simons theory. Furthermore, this invariant is the
semi-classical limit of $W_b(M; \mathcal T, \alpha)$ when $b$
degenerates.

\appendix

\section{Special functions}
\subsection{Faddeev's quantum dilogarithm}
Faddeev's quantum dilogarithm $\Phi_b(z)$ is defined by the integral
   \begin{equation}\label{eq:ncqdl}
\Phi_b(z) \equiv \exp \left( \int_{\R+\textup{i}0} \frac{e^{-2\textup{i}zw}dw}{4\sinh(wb)\sinh(w/b)w} \right),
    \end{equation}
in the strip $|\textup{Im} z|<|\textup{Im} c_b|$, where
$$
c_b = \textup{i} (b+b^{-1})/2.
$$

It is usefull to define
\begin{equation}\label{eq:cons}
\zeta_{inv}\equiv e^{\pi \textup{i}(1+2 c_b^2)/6}=e^{\pi \textup{i} c_b^2}
\zeta_o^2,\quad
\zeta_o \equiv e^{\pi \textup{i}(1-4c_b^2)/12}.
\end{equation}

\beqa
\text{symmetry} \label{fadgdual}  \quad&
 \Phi_b(z) = \Phi_{-b}(z) = \Phi_{1/b}(z) \,,  \\[0.5mm]
\text{functional equations}  \label{fadgfunrel} \quad&
 \Phi_b(z-\textup{i}b^{\pm1}/2) = (1+e^{2\pi b^{\pm1}z}) \Phi_b(z+\textup{i}b^{\pm1}/2) \,, \\[0.5mm]
\text{inversion property} \label{invers} \quad&
 \Phi_b(z) \Phi_b(-z) = \zeta^{-1}_{inv} e^{\pi \textup{i} z^2}  \,, \\[0.5mm]
 \label{fadgz}
 \text{zeros} \quad&
 z \in  \big\{ c_b + m\textup{i}b + n\textup{i}b^{-1};m,n\in\BZ^{\geq 0}\big\}
    \,, \\[0.5mm] \label{fadp}
 \text{poles} \quad&
 z \in  \big\{ -c_b - m\textup{i}b - n\textup{i}b^{-1};m,n\in\BZ^{\geq 0}\big\}
    \,, \\[0.5mm]
\label{fadgres}
 \text{unitarity} \quad&
  \overline{\Phi_b(z)} = 1/\Phi_b(\overline{z}).
\eeqa

\subsection{The elliptic Gamma function}
The elliptic gamma function is defined by the formula
\begin{equation}\label{ellgamma}
\Gamma(z;p,q) \ = \ \prod_{i,j=0}^\infty \frac{1-z^{-1}p^{i+1}q^{j+1}}{1-zp^iq^j},
\end{equation}
and it satisfies the following properties
\beqa
\text{symmetry} \label{ellgdual}  \quad&
 \Gamma(z;p,q) = \Gamma(z;q,p) \,,  \\[0.5mm]
\text{functional equations}  \label{ellgfunrel} \quad&
 \Gamma(qz;p,q) = \theta(z;p) \Gamma(z;p,q), \\
 & \Gamma(pz;p,q) = \theta(z;q) \Gamma(z;p,q) \,, \\[0.5mm]
\text{reflection property} \label{ellgrefl} \quad&
 \Gamma(z;p,q) \; \Gamma(\frac{pq}{z};p,q) = 1  \,, \\[0.5mm]
 \label{ellgz}
 \text{zeros} \quad&
 z \in  \big\{ p^{i+1}q^{j+1};i,j\in\BZ^{\geq 0}\big\}
    \,, \\[0.5mm] \label{ellp}
 \text{poles} \quad&
 z \in  \big\{ p^{-i}q^{-j};i,j\in\BZ^{\geq 0}\big\}
    \,, \\[0.5mm]
\label{ellgres}
 \text{residue} \quad&
  \Res_{z=1} \Gamma(z;p,q) = -\frac{1}{(p;p)_\infty (q;q)_\infty}.
\eeqa

Here $\theta(z;p)$ is a theta-function
$\theta(z;p) = (z;p)_\infty (pz^{-1};p)_\infty.$

\subsection{Some useful formulas}
Faddeev's quantum dilogarithm and the hyperbolic gamma functions are related via formula
\[
\gamma^{(2)}(- \textup{i}\sqrt{\omega_1\omega_2}(x+c_b);\omega_1,\omega_2)=\frac{e^{\textup{i}\pi x^2/2}}{\sqrt{\zeta_{inv}}\Phi_b(x)},
\]
where $b:=\sqrt{\frac{\omega_1}{\omega_2}}$.

Recall that the inversion relation (\ref{Inv}) for
$\gamma^{(2)}(x)$ is of the form
\[
\gamma^{(2)}(x;\omega_1,\omega_2)\gamma^{(2)}(\omega_1+\omega_2-x;\omega_1,\omega_2)=1
\]
and the complex conjugation property
\[
\overline{\gamma^{(2)}(z)}=\gamma^{(2)}(\bar z).
\]
If we define
\[
\mathcal{B}(u,v):=\frac{\gamma^{(2)}(u;\omega_1,\omega_2)\gamma^{(2)}(v;\omega_1,\omega_2)}{\gamma^{(2)}(u+v;\omega_1,\omega_2)}
\]
then it is easy to see that
\begin{equation}\label{eq:BPsi}
\mathcal{B}\left(\sqrt{-\omega_1\omega_2}x,\sqrt{-\omega_1\omega_2}y\right)=\Psi\left(\frac x2 +c_b,-\frac x2-c_b,y\right)
\end{equation}
where
\begin{equation}\label{eq:Psi}
\Psi(u,v,w):=\int_{\R}\frac{\Phi_b(u+x)}{\Phi_b(v+x)}e^{\textup{i}2\pi wx}dx,
\end{equation}
which is calculated as follows \cite{Faddeev:2000if}
\beq
\Psi(u,v,w) = \zeta_o\frac{\Phi_b(u-v-c_b) \Phi_b(w+c_b)}{\Phi_b(u-v+w-c_b)} e^{-2 \pi \textup{i} w(v+c_b)}.
\eeq

\end{document}